\documentclass[a4paper,11pt]{article}
\usepackage[latin1]{inputenc}
\usepackage[english]{babel}
\usepackage{amsmath}
\usepackage{amsfonts}
\usepackage{amssymb}
\usepackage{epsfig}
\usepackage{amsopn}
\usepackage{amsthm}
\usepackage{color}
\usepackage{graphicx}
\usepackage{enumerate}
\usepackage{mathrsfs}
\usepackage{cite}
\newtheorem{theorem}{Theorem}[section]

\newtheorem{lemma}[theorem]{Lemma}
\newtheorem{proposition}[theorem]{Proposition}

\newtheorem*{theorem*}{Theorem}
\newtheorem*{lemma*}{Lemma}
\newtheorem*{remark*}{Remark}
\newtheorem*{definition*}{Definition}
\newtheorem*{proposition*}{Proposition}
\newtheorem*{corollary*}{Corollary}
\numberwithin{equation}{section}
\newcommand{\real}{\mathbb{R}}

\let\ced=\c         

\def\qed{\,\unskip\kern 6pt \penalty 500
\raise -2pt\hbox{\vrule \vbox to8pt{\hrule width 6pt
\vfill\hrule}\vrule}\par}
\definecolor{darkblue}{rgb}{0.05, .05, .65}
\definecolor{darkgreen}{rgb}{0.1, .65, .1}
\definecolor{darkred}{rgb}{0.8,0,0}
\newcommand{\beqn}{\begin{equation}}
\newcommand{\eeqn}{\end{equation}}
\newcommand{\bear}{\begin{eqnarray}}
\newcommand{\eear}{\end{eqnarray}}
\newcommand{\bean}{\begin{eqnarray*}}
\newcommand{\eean}{\end{eqnarray*}}
\begin{document}

\title{\huge \bf A porous medium equation with dominating weighted absorption: three types of self-similar solutions}
\author{
\Large Razvan Gabriel Iagar\,\footnote{Departamento de Matem\'{a}tica
Aplicada, Ciencia e Ingenieria de los Materiales y Tecnologia
Electr\'onica, Universidad Rey Juan Carlos, M\'{o}stoles,
28933, Madrid, Spain, \textit{e-mail:} razvan.iagar@urjc.es}
\\[4pt] \Large Diana-Rodica Munteanu\,\footnote{Faculty of Psychology and Educational Sciences, Ovidius University of Constanta, 900527, Constanta, Romania, \textit{e-mail:} diana.rodica.merlusca@gmail.com}\\
}
\date{}
\maketitle

\begin{abstract}
Self-similar solutions to the porous medium equation with dominating spatially inhomogeneous absorption
$$
\partial_tu=\Delta u^m-|x|^{\sigma}u^p, \quad (x,t)\in\real^N\times(0,\infty), \quad N\geq1,
$$
with exponents $1<p<m$ and $\sigma>0$, are classified. Looking for solutions in the form
$$
u(x,t)=t^{-\alpha}f(|x|t^{\beta}), \quad \alpha=\frac{\sigma+2}{\sigma(m-1)+2(p-1)}, \quad \beta=\frac{m-p}{\sigma(m-1)+2(p-1)},
$$
it is shown that all their profiles satisfy the behavior at infinity given by
$$
\lim\limits_{\xi\to\infty}\xi^{\sigma/(p-1)}f(\xi)=\left(\frac{1}{p-1}\right)^{1/(p-1)},
$$
but the solutions strongly differ with respect to their behavior near the origin: there exist a unique solution with $f(0)>0$, $f'(0)=0$, another unique solution such that $f$ presents a \emph{dead-core}; that is, $f\equiv0$ for $\xi\in[0,\xi_0]$ for some $\xi_0>0$, and, finally, there exists $K^*\in(0,\infty)$ such that, for any $K\in(0,K^*)$, there is at least a solution such that
$$
\lim\limits_{\xi\to0}\xi^{-(\sigma+2)/(m-p)}f(\xi)=K.
$$
The large time behavior of general solutions, making strong use of these three types of self-similar solutions, will be addressed in a companion work.
\end{abstract}

\smallskip

\noindent {\bf MSC Subject Classification 2020:} 35A24, 35B36, 35C06, 35K59, 35K65, 34D05.

\smallskip

\noindent {\bf Keywords and phrases:} spatially inhomogeneous absorption, self-similar solutions, dead-core, waiting time, strong absorption.

\section{Introduction}

This paper is the first of a two-part work aimed at classifying the large time behavior of solutions to the following quasilinear equation with weighted absorption
\begin{equation}\label{eq1}
\partial_tu=\Delta u^m-|x|^{\sigma}u^p, \quad (x,t)\in\real^N\times(0,\infty),
\end{equation}
in the range of exponents
\begin{equation}\label{range.exp}
1<p<m, \quad \sigma>0, \quad N\geq1.
\end{equation}
In this paper, we perform a complete classification of the self-similar solutions to Eq. \eqref{eq1} with respect to the behavior of their profiles at the origin and near infinity, while the proper asymptotic convergence of general solutions to the self-similar ones as $t\to\infty$, depending on the support and properties of the initial condition, will be addressed in a companion work.

Eq. \eqref{eq1} features a competition between the diffusion and the absorption term. While the former preserves the $L^1$-norm of any solution along the evolution, while spreading its mass, the latter introduces a loss of total mass, which is, in our case, stronger in regions with $|x|$ large, by the influence of the weight $|x|^{\sigma}$. As we shall see in the next discussion, the range $1<p<m$ is a very interesting one, since the influence of the absorption is stronger than the one of the diffusion, leading to a number of properties which strongly depart from the complementary range $p\geq m$.

The qualitative properties of absorption-diffusion equations such as \eqref{eq1} depend strongly on the relative position of $m$ and $p$. Indeed, the spatially homogeneous counterpart of Eq. \eqref{eq1}, that is, 
\begin{equation}\label{eq1.hom}
\partial_tu=\Delta u^m-u^p, \quad m>1, \ p>0,
\end{equation}
has been largely studied in the last forty years and three different ranges with significant differences regarding the large time behavior of solutions have been identified:

$\bullet$ in the range $p>m$, it is well-established that the diffusion term plays the main role in the evolution. More precisely, the large time behavior of solutions to Eq. \eqref{eq1.hom} is considered in \cite{KP86, PT86, KU87, KV88, PW88, GV91, Le97, Kwak98, Kwak98b}. In this range there exists a critical exponent (curiously, identical to the celebrated Fujita exponent for the quasilinear reaction-diffusion equation) $p_F=m+2/N$ such that, for $p>p_F$, the large time behavior is expressed only in terms of solutions to the porous medium equation. We can say that the effect of the absorption is almost canceled after large time. In the complementary range $p\in(m,p_F)$ the asymptotic pattern is represented by some specific self-similar solutions depending on the properties of the initial condition. In particular, very interesting solutions nowadays known as very singular have been obtained and analyzed. A general classification of singular solutions in all ranges is available in \cite{KPV89}. The limiting range $p=p_F$, treated in \cite{GV91}, introduces some logarithmic corrections in the time scales, as an effect of the resonance between the diffusion and the absorption terms.

$\bullet$ in the range $1<p<m$ the absorption plays the dominating role, as it can be seen from the property of localization of supports established (in a very general case including Eq. \eqref{eq1}) in \cite{PT85}. More precisely, if the initial condition $u_0$ is compactly supported, then there is $R_0>0$ (not depending on time) such that the support of $u(t)$ is contained in $B(0,R_0)$ for any $t>0$. This property is an obvious manifestation of the dominance of the absorption, since it is well-known that the diffusion alone tends to occupy the entire $\real^N$ as $t\to\infty$. Interesting self-similar solutions have been obtained in \cite{KPV85, MPV91, CV96}, while a description of the large time behavior with a remarkable formation of a boundary layer (still available, to the best of our knowledge, only in dimension $N=1$) follows from the results of \cite{BNP82, CV99}. The limiting range $p=m$ (see \cite{CVW97}) is very interesting as well, since an easy transformation maps Eq. \eqref{eq1.hom} into a Fisher-KPP type equation considered in \cite{KR04}.

$\bullet$ in the range $0<p<1$, also known as \emph{strong absorption}, studying Eq. \eqref{eq1} seems to be very complicated and a full description of the large time behavior appears to be available only when $m+p=2$ in \cite{GV94}. The instantaneous shrinking of supports of solutions is a very unexpected and counter-intuitive property shared by solutions in this range: instead of expanding, the supports of solutions shrink to the origin (and become compact instantaneously if $u_0>0$ in $\real^N$), a property established in \cite{Abd98}. This shrinking goes exactly in the opposite direction to the diffusion, proving the strength of the absorption effect. Moreover, it is well-known that in this range finite time extinction occurs: solutions vanish completely at some time $T\in(0,\infty)$, as all their mass is lost because of the absorption.

\medskip 

In recent years, the first author and his collaborators started a larger project of understanding the effect of the presence of an unbounded weight on the qualitative and dynamical properties of the solutions to Eq. \eqref{eq1}. The range $p>m$ is thus considered in the two previous works by the authors \cite{IM25, IM26}, covering the large time behavior of solutions to the Cauchy problem for Eq. \eqref{eq1} with almost all types of reasonable initial conditions. The critical case $p=m$ has been recently investigated in \cite{IL26}, and it has been observed that, for $\sigma>N(m-1)/(m+1)$, the large time behavior is self-similar, strongly contrasting with the dynamics for $\sigma=0$, which is of Fisher-KPP type after a transformation.

In the strong absorption range $0<p<1$ a number of results have been obtained in works such as \cite{IL23, ILS24, ILS26} (see also references therein), showing that, for $\sigma>0$ sufficiently large, an interesting alternative between positivity for any $t>0$ and decay as $t\to\infty$ on the one hand, and finite time extinction on the other hand, takes place, depending on the initial condition. The latter works complete the study of finite time extinction performed in previous works such as \cite{Belaud01, BeSh22} (see also references therein). 

We thus address here the remaining range $1<p<m$. The present paper is dedicated to the first step towards the large time behavior of solutions, which is a complete and detailed classification of the self-similar solutions to Eq. \eqref{eq1}. As it is well-understood in the complementary ranges and, more general, in nonlinear diffusion equations, these particular solutions are fundamental both in the large time behavior, as patterns for general solutions, as well as in arguments by comparison or as optimizers in inequalities for the solutions. This is why, due to the complexity of the classification, we devote a full work to it. The large time behavior of general solutions, indicating in each case which self-similar solution is chosen depending on the initial condition, will be considered in a second work dedicated to the range \eqref{range.exp}.

\medskip

\noindent \textbf{Main results.} As explained in the previous paragraphs, the goal of this work is to give a classification of the self-similar solutions in the form
\begin{equation}\label{SSS}
u(x,t)=t^{-\alpha}f(|x|t^{\beta}), \quad (x,t)\in\real^N\times(0,\infty)
\end{equation}
to Eq. \eqref{eq1} in the range of exponents \eqref{range.exp}. Introducing the ansatz \eqref{SSS} in \eqref{eq1}, we deduce after direct calculations that the self-similar exponents are
\begin{equation}\label{SSexp}
\alpha=\frac{\sigma+2}{\sigma(m-1)+2(p-1)}, \quad \beta=\frac{m-p}{\sigma(m-1)+2(p-1)},
\end{equation}
while the profiles $f$ satisfy the differential equation
\begin{equation}\label{SSODE}
(f^m)''(\xi)+\frac{N-1}{\xi}(f^m)'(\xi)+\alpha f(\xi)-\beta\xi f'(\xi)-\xi^{\sigma}f^p(\xi)=0,
\end{equation}
where $\xi=|x|t^{\beta}\geq0$.

In the standard case $\sigma=0$, the different types of self-similar solutions are established in a number of works, mainly in dimension $N=1$, although some extensions to dimension $N\geq2$ are available. Thus, self-similar solutions having a specific behavior of $f(\xi)$ as $\xi\to0$ are introduced as solutions of the Cauchy problem with a specific initial condition in \cite{KPV85}, their classification with respect to the precise coefficient of the behavior in the origin being completed in \cite{MPV91}. It is also obvious that, when $\sigma=0$, there exists a unique constant solution
\begin{equation}\label{const.inf}
c^*:=\left(\frac{1}{p-1}\right)^{1/(p-1)}
\end{equation}
to Eq. \eqref{SSODE}, whose importance has been emphasized in the analysis performed in \cite{CV99} as uniform limit of any general solution in any compact subset of its positivity set. Finally, a self-similar solution with a very interesting profile having a dead-core at some $\xi_0>0$ and tending to $c^*$ as $\xi\to\infty$ has been identified in \cite{CV96} and proved to be very interesting for the large time behavior of solutions stemming from suitable initial conditions, with a left interface, in dimension $N=1$, in \cite{CV99}. To the best of our knowledge, a study of the large time behavior of solutions to Eq. \eqref{eq1} in dimension $N\geq2$ is missing from literature.

We generalize thus the previous results to Eq. \eqref{eq1}, directly posed in dimension $N\geq1$ and with an unbounded coefficient $|x|^{\sigma}$ for $\sigma>0$. The influence of the unbounded coefficient can be seen at first in the non-existence of a flat solution, that is, a constant profile similar to $c^*$ solving \eqref{SSODE}. Moreover, from the technical point of view, the combination of both higher-space dimension and coefficient $|x|^{\sigma}$ makes more difficult to employ similar techniques as in the previously mentioned references. 

The first result is not really surprising, in view of precedents such as \cite[Theorem 1.1(a)]{IM25} and \cite[Theorem 2.1(a)]{IM26} dealing with the range $p>m$, and establishes the existence and uniqueness of a positive self-similar solution.
\begin{theorem}\label{th.SSSpos}
Let $m$, $p$ and $\sigma$ be as in \eqref{range.exp}. Then, the profiles of all bounded self-similar solutions to Eq. \eqref{eq1} present the following behavior at infinity:
\begin{equation}\label{beh.inf}
\lim\limits_{\xi\to\infty}\xi^{\sigma/(p-1)}f(\xi)=c^*,
\end{equation}
where $c^*$ is defined in \eqref{const.inf}. Moreover, there is a unique $A^*>0$ and a unique self-similar profile $f^*$ solution to \eqref{SSODE} such that $f^*(0)=A^*$, $(f^*)'(0)=0$ and $f^*$ behaves as in \eqref{beh.inf} as $\xi\to\infty$. Furthermore, $f^*$ is decreasing on $(0,\infty)$.
\end{theorem}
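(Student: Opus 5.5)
The plan is to prove the three assertions in turn: the universal behavior at infinity, then existence of $f^*$ by a shooting argument, then uniqueness and monotonicity by comparison.

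\textbf{Step 1: behavior at infinity.} Let $f$ be the profile of a bounded self-similar solution. Set $g(\xi)=\xi^{\sigma/(p-1)}f(\xi)$ and rewrite \eqref{SSODE} in the variable $s=\ln\xi$. In the range \eqref{range.exp} one checks that
$$
\frac{\sigma+2}{m-p}>\frac{\sigma}{p-1}
$$
fails in the right direction for diffusion. Precisely, with $f\sim\xi^{-\sigma/(p-1)}$ the diffusion term $(f^m)''$ is of order $\xi^{-\sigma m/(p-1)-2}$. The terms $\alpha f-\beta\xi f'$ and $\xi^\sigma f^p$ are of order $\xi^{-\sigma/(p-1)}$. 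Since $m>p$ forces $\sigma m/(p-1)+2>\sigma/(p-1)$, diffusion is negligible at infinity. The leading balance is then the first-order equation
$$
\alpha f-\beta\xi f'=\xi^\sigma f^p .
$$
Its solutions converge to the explicit solution $f=c^*\xi^{-\sigma/(p-1)}$. Indeed, $\alpha+\beta\sigma/(p-1)=1/(p-1)$, which gives exactly $c^{*\,p-1}=1/(p-1)$. Other solutions of the reduced equation have the form $\xi^{\alpha/\beta}$ times a function tending to $c^*\xi^{-\sigma/(p-1)}$, and all of them approach it since $\alpha/\beta>0$ is dominated.

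To make this rigorous, I would use a phase-space reformulation, for instance variables $X=\xi^{\sigma}f^{p-1}$ and $Y$ built from $(f^m)'/(\xi f)$. In these variables the behavior \eqref{beh.inf} corresponds to a critical point, and I would show it is an attractor for all bounded trajectories as $\xi\to\infty$. The alternatives to exclude are $f$ vanishing at a finite point with the wrong behavior, $f$ being unbounded, and oscillation. Oscillation I would exclude with a Lyapunov-type or monotonicity argument on $g$: at a local maximum of $g$, the equation forces $g<c^*(1+o(1))$, and symmetrically at a minimum, so $g$ converges. Boundedness is used to rule out the unstable branch where $f$ grows like $\xi^{\alpha/\beta}$.

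\textbf{Step 2: existence by shooting.} Consider the family of solutions with $f(0)=A>0$ and $f'(0)=0$, which exists by standard local theory, and classify them into three sets:
\begin{itemize}
\item $\mathcal{A}$: those $A$ for which $f$ vanishes at a finite point (hits zero with $f'<0$, or reaches a dead interface);
\item $\mathcal{C}$: those $A$ for which $f$ is positive and becomes unbounded (for example, increasing after a minimum);
\item $\mathcal{B}$: the remaining $A$, whose profiles are global, positive and bounded, and hence satisfy \eqref{beh.inf} by Step 1.
\end{itemize}
For small $A$ the absorption $\xi^\sigma f^p$ dominates $\alpha f$, because $p<m$ and the natural rescaling $f=A\,h(A^{(m-p)/(\sigma+2)}\ldots)$ is available. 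This drives $f$ to zero, so $A\in\mathcal{A}$. For large $A$, the rescaled limit is the equation without absorption, whose radial solutions starting with zero slope increase or blow up in this sign configuration, so $A\in\mathcal{C}$. Both $\mathcal{A}$ and $\mathcal{C}$ are open by continuous dependence, so $\mathcal{B}\neq\emptyset$.

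\textbf{Step 3: uniqueness and monotonicity.} For uniqueness I would compare two profiles $f_1$, $f_2$ in $\mathcal{B}$ with $A_1<A_2$. Use the scaling $f_\lambda(\xi)=\lambda f(\lambda^{-\gamma}\xi)$ as a sub/supersolution argument, or a Sturm-type intersection comparison, to show that the ordering at $0$ is incompatible with both profiles having the same limit $c^*$ at infinity with the same next-order correction. Monotonicity follows from the equation: at an interior local minimum of $f^*$ we would have $\alpha f\le\xi^\sigma f^p$. A subsequent maximum would then require $\alpha f\ge\xi^\sigma f^p$ at a larger $\xi$ with larger $f$, which is impossible since $\xi^\sigma f^{p-1}$ increases there. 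Hence $f^*$ has no minimum and is decreasing.

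\textbf{Main obstacle.} I expect the uniqueness step to be the hardest, since the weight $|x|^\sigma$ destroys the simple scaling invariance. For it I would first try a direct monotonicity-in-$A$ argument: show that $A\mapsto f(\cdot;A)$ is ordered up to the first crossing point, then derive a contradiction at that crossing using the first integral of the reduced asymptotic equation.
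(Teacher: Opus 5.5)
Your architecture matches the paper's. The three-set shooting in $A=f(0)$ is equivalent to the paper's shooting in $C$ along the trajectories $l_C$ out of $P_0$. Your monotonicity argument (no local minimum) is the paper's observation that $Y=0$ cannot be recrossed without entering $\mathcal{R}$. However, the nonemptiness of the two outer sets is justified backwards. Rescaling $f(\xi)=A\,h(\zeta)$ with $\zeta=\xi A^{-(m-1)/2}$ turns \eqref{SSODE} into
\[
(h^m)''+\frac{N-1}{\zeta}(h^m)'+\alpha h-\beta\zeta h'-A^{L/2}\zeta^{\sigma}h^p=0,\qquad L=\sigma(m-1)+2(p-1)>0,
\]
so for small $A$ the absorption is negligible, not dominant. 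In the limit problem, $(\zeta^{N-1}(h^m)')'=\zeta^{N-1}(\beta\zeta h'-\alpha h)<0$ as long as $h'\le0$. Hence $h$ decreases and vanishes at a finite point with $(h^m)'<0$; this is the paper's trajectory $l_0\subset\{Z=0\}$ ending at $Q_3$. With $\zeta=\xi A^{-(m-p)/(\sigma+2)}$ instead, the terms $\alpha h-\beta\zeta h'$ carry the factor $A^{-L/(\sigma+2)}$. So for large $A$ the limit is $\Delta h^m=|\zeta|^{\sigma}h^p$, and it is the absorption that makes $f$ turn upward (the paper's $l_\infty\subset\{x=0\}$). Your claim that the absorption-free equation has increasing solutions from zero slope is false, since $N(h^m)''(0)=-\alpha h(0)<0$. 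Moreover, ``$f$ becomes unbounded'' is not an open condition, because continuous dependence only acts on compact intervals. You need a fate decided at a finite $\xi$, such as ``$f'$ changes sign from negative to positive''. At such a point your own Step 3 computation gives $\xi^{\sigma}f^{p-1}\ge\alpha$, and no later maximum is possible. This is exactly entry into the positively invariant region $\mathcal{R}$, an open condition, which is what the paper uses.

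Two further pieces are missing. In Step 1, your bound at critical points of $g=\xi^{\sigma/(p-1)}f$ is actually correct: by \eqref{ODEg}, the zero-order diffusion term contributes only $Kf^{m-1}\xi^{-2}$ there, with $K=\frac{m\sigma}{p-1}\bigl(\frac{m\sigma}{p-1}-N+2\bigr)$, which is sharper than the paper's Step 7. But it only handles oscillation. For a bounded decreasing $f$ you must still exclude an eventually monotone $g\to\infty$, i.e.\ $Z\to\infty$; this is the delicate Step 4 of Section~\ref{sec.existQ1}. You must also exclude $g\to0$ and finite limits other than $c^*$, and ``diffusion is negligible'' is exactly what has to be proved there.

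For uniqueness you give no argument, and a first integral of the reduced first-order equation cannot control an intersection at a finite point. The missing idea is that $f_\lambda(\xi)=\lambda^{-2/(m-1)}f(\lambda\xi)$ leaves the diffusion and $\alpha f-\beta\xi f'$ terms invariant and multiplies the absorption by $\lambda^{L/(m-1)}$. So for $\lambda>1$ it is a strict subsolution of \eqref{SSODE}, with tail $\lambda^{-L/[(m-1)(p-1)]}c^*\xi^{-\sigma/(p-1)}$ strictly below that of any profile satisfying \eqref{beh.inf}. Given two such profiles, $f_{2,\lambda}<f_1$ for $\lambda$ large. Decreasing $\lambda$, a first contact at some $\lambda_0>1$ cannot occur at infinity, because of the tails. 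Nor can it occur at a finite $\tilde\xi>0$: subtracting the equations there gives $(\lambda_0^{L/(m-1)}-1)\tilde\xi^{\sigma}f_1^p(\tilde\xi)\le0$. Contact at $\tilde\xi=0$ is excluded because this absorption difference dominates near the origin. Hence $f_2\le f_1$, and by symmetry $f_1=f_2$, with no ordering in $A$ needed. This is the rescaling argument the paper uses in Step 3 of its dead-core uniqueness proof and defers to its earlier $p>m$ work for this theorem. The weight is not the obstacle you anticipate.
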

The uniqueness of the admissible behavior \eqref{beh.inf} as $\xi\to\infty$ is an important feature of the range $1<p<m$, departing strongly from the range $p>m$, where different tails as $\xi\to\infty$, as well as compactly supported self-similar solutions, have been identified in \cite{IM25}. It is a rather unexpected fact that there are no compactly supported self-similar solutions in the form \eqref{SSS}, in spite of the localization property established in \cite{PT85}. On the contrary, as we shall see in the next two statements, in the range \eqref{range.exp} the behavior near the origin becomes rather diverse.

The next theorem extends to the $N$-dimensional space and to the more general equation \eqref{eq1} with $\sigma>0$ the outcome of \cite{CV96}. 
\begin{theorem}\label{th.SSSdc}
Let $m$, $p$ and $\sigma$ be as in \eqref{range.exp}. Then there exists a unique $\xi_0\in(0,\infty)$ and a unique self-similar profile $f_*$ solution to \eqref{SSODE} such that it presents a dead-core at $\xi=\xi_0$, in the sense that
\begin{equation}\label{beh.dc}
f_*(\xi)\begin{cases}
        =0, & \mbox{if } 0\leq\xi\leq\xi_0, \\
        >0, & \mbox{if } \xi_0<\xi<\infty,
      \end{cases} \quad (f_*^m)'(\xi_0)=0,
\end{equation}
and it satisfies \eqref{beh.inf} as $\xi\to\infty$. Moreover, $f_*$ has a single point of maximum on $(\xi_0,\infty)$.
\end{theorem}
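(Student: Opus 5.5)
We propose a shooting argument from the interface $\xi_0$, combined with monotonicity in the shooting parameter. The first step is a rescaling that fixes the interface. If $f$ solves \eqref{SSODE} with a dead-core at $\xi_0$, set $f(\xi)=\xi_0^{\gamma}g(\eta)$ with $\eta=\xi/\xi_0$ and $\gamma=(\sigma+2)/(m-p)$. The homogeneity of \eqref{SSODE} does not balance all terms at once, so one parameter survives in front of the terms $\alpha f-\beta\xi f'$. We therefore obtain a one-parameter family of problems with the interface normalized at $\eta=1$.

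Next we construct the local solutions near the interface. Write $w=f^m$, so that $w(\xi_0)=w'(\xi_0)=0$. Near $\xi_0$ the absorption $\xi^\sigma w^{p/m}$ dominates the transport terms, because $p<m$. The local balance $w''\sim\xi_0^\sigma w^{p/m}$ then gives
$$f(\xi)\sim C(\xi-\xi_0)^{2/(m-p)}, \qquad C=\left(\frac{\xi_0^{\sigma}(m-p)^2}{2m(m+p)}\right)^{1/(m-p)}.$$
We will prove existence and uniqueness of such a local solution for each $\xi_0>0$. The method is a fixed-point argument for the integral equation
$$w(\xi)=\int_{\xi_0}^{\xi}\int_{\xi_0}^{s}\left(\frac{r}{s}\right)^{N-1}\left[r^\sigma w^{p/m}-\alpha w^{1/m}+\beta r (w^{1/m})'\right]dr\,ds,$$
posed in a weighted space $w\approx(\xi-\xi_0)^{2m/(m-p)}$. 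A cleaner alternative is to pass to a phase-space formulation, where this local solution becomes the unique trajectory entering from a specific critical point. A delicate point is that the map $w\mapsto w^{p/m}$ is non-Lipschitz at zero. We will handle this by working with the ratio $w/(\xi-\xi_0)^{2m/(m-p)}$, which stays bounded away from zero. Denote the resulting solution by $f(\cdot;\xi_0)$.

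Then we classify the behaviour of $f(\cdot;\xi_0)$ for $\xi>\xi_0$ into three sets. Set $\mathcal{A}$ contains those $\xi_0$ for which $f$ blows up at a finite $\xi$ or grows unboundedly. Set $\mathcal{C}$ contains those for which $f$ reaches a maximum and then decreases to zero at a finite point, or changes sign. Set $\mathcal{B}$ contains those for which $f$ stays positive and bounded, and then Theorem \ref{th.SSSpos} forces \eqref{beh.inf}. We show that $\mathcal{A}$ and $\mathcal{C}$ are open by continuous dependence on $\xi_0$ over compact intervals. We show both are nonempty via the two limits $\xi_0\to0$ and $\xi_0\to\infty$. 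In the rescaled variables these limits make the transport terms either negligible or dominant, which we compare with the explicit behaviour of the limit problems. Connectedness of $(0,\infty)$ then yields $\xi_0\in\mathcal{B}$. For the shape of $f_*$: it starts increasing from $0$ and tends to zero at infinity, so it has a maximum. At any critical point we have $(f^m)''=\xi^\sigma f^p-\alpha f$. This sign condition, together with a comparison against the curve $\xi^\sigma f^{p-1}=\alpha$ (along which $\xi^{\sigma/(p-1)}$ grows), rules out a local minimum following a maximum. Hence the maximum is unique.

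Uniqueness of $\xi_0$, the step I expect to be the main obstacle, will follow from a monotonicity property of the family in $\xi_0$, in the spirit of \cite{CV96}. We compare $f(\cdot;\xi_1)$ and $f(\cdot;\xi_2)$ for $\xi_1<\xi_2$, using the rescaling $f_\lambda(\xi)=\lambda^{\gamma}f(\xi/\lambda)$. This rescaling maps solutions of the absorption-diffusion part exactly, and perturbs only the transport terms, with a definite sign. The goal is to show that two distinct members of $\mathcal{B}$ would intersect in a way incompatible with their common tail \eqref{beh.inf}. To do so we will study the difference of two tails in the linearization around $c^*\xi^{-\sigma/(p-1)}$, showing that one decaying mode is excluded and that the ordering between the two profiles is preserved. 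If this comparison fails, the fallback is to prove that $\mathcal{B}$ is a single point via a strict monotonicity in $\xi_0$ of the first maximum height of the profile.
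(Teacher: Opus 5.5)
There are two genuine gaps: the interface behaviour you build the shooting family on is impossible, and the rescaling at the heart of your uniqueness plan does not have a definite sign.

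\textbf{The interface analysis.} Near $\xi_0>0$ the absorption does not dominate the transport. For your ansatz $f\sim C(\xi-\xi_0)^{2/(m-p)}$, both $(f^m)''$ and $\xi^{\sigma}f^p$ are of order $(\xi-\xi_0)^{2p/(m-p)}$. The term $\beta\xi f'$, however, is of order $(\xi-\xi_0)^{(2-m+p)/(m-p)}$, which is much larger because $m+p>2$, so the ansatz is inconsistent. Integrating $(\xi^{N-1}(f^m)')'=\xi^{N-1}[\xi^{\sigma}f^p-\alpha f+\beta\xi f']$ from $\xi_0$ (for $f$ increasing near $\xi_0$) gives instead $(f^m)'=\beta\xi_0 f\,(1+o(1))$. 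That is the moving-interface behaviour \eqref{beh.interf}, $f^{m-1}\sim\frac{\beta(m-1)}{m}\xi_0(\xi-\xi_0)$: diffusion balances transport, and absorption is of lower order. Accordingly, your fixed-point map does not preserve the class $w\approx(\xi-\xi_0)^{2m/(m-p)}$, since the double integral of $\beta r(w^{1/m})'$ is of order $(\xi-\xi_0)^{(m-p+2)/(m-p)}$. The real degeneracy is this transport term, not $w^{p/m}$: its leading part $w'\approx\beta\xi_0w^{1/m}$ also admits $w\equiv0$.

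The paper obtains all dead-core profiles as the two-dimensional unstable manifold of the saddle $Q_1$, with $C\leftrightarrow\xi_0$ given by \eqref{corresp}. With that family in place of yours, your shooting in $\xi_0$ is the paper's shooting in $C$. Two further points in the existence part:
\begin{itemize}
\item Openness of your ``unbounded'' set does not follow from continuous dependence on compact intervals. You need the forward-invariant region $\{f'>0,\ \xi^{\sigma}f^{p-1}>\alpha\}$, which is $\mathcal{R}$ of Proposition \ref{prop.inv}.
\item Proving that the remaining profiles have the tail \eqref{beh.inf} takes the work of Steps 1--7: excluding $Z\to\infty$, the points $Q_\gamma$ with $\gamma\neq\gamma_0$, and oscillations.
\end{itemize}
Your single-maximum argument coincides with the paper's and is fine.

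\textbf{The uniqueness rescaling.} For $g(\xi)=\lambda^{\gamma}f(\xi/\lambda)$ with $\gamma=(\sigma+2)/(m-p)$, the residual in \eqref{SSODE} equals $(\lambda^{\gamma}-\lambda^{m\gamma-2})(\alpha f-\beta\eta f')(\eta)$, where $\eta=\xi/\lambda$. The factor $\alpha f-\beta\eta f'$ changes sign along every dead-core profile. It is negative near the interface, where $\xi f'/f\to+\infty$, and it is at least $\alpha f>0$ wherever $f'\leq0$. Hence $g$ is neither a sub- nor a supersolution, and no comparison follows.

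Tail linearization cannot substitute for this. The trajectories entering $Q_{\gamma_0}$ fill its two-dimensional center manifold, which is a one-parameter family of tails, so a common tail does not single out a profile. Nor would monotonicity of the maximum height in $\xi_0$ by itself make $\mathcal{B}$ a point.

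Two ingredients are missing:
\begin{itemize}
\item \emph{An ordering of two hypothetical profiles $f_1,f_2$ with $\xi_1<\xi_2$.} The paper uses intersection comparison between $u_1$ and the time-shifted $u_2(x,t+\tau-1)$ to show they cross at most once. It then compares them at the instant the two interfaces coincide, using the $\xi_0$-dependent slope in \eqref{beh.interf} (exactly the asymptotics you got wrong), to obtain $f_1\geq f_2$.
\item \emph{The opposite scaling $f_\lambda(\xi)=\lambda^{-2/(m-1)}f_2(\lambda\xi)$.} This leaves diffusion plus transport invariant and multiplies the absorption by $\lambda^{L/(m-1)}>1$, with $L=\sigma(m-1)+2(p-1)$. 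That perturbation has a definite sign, so $u_\lambda$ is a subsolution for $\lambda>1$. Sliding $\lambda$ until the supports touch, then using a small time shift, contradicts the ordering of the supports.
\end{itemize}
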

Let us stress here that the condition $(f_*^m)'(\xi_0)=0$ at a zero of $f_*$ is a requirement for a solution to a porous medium equation, as explained in \cite[Section 9.8]{VPME}. One can check that, if this condition is not fulfilled, the extension of $f_*$ by zero outside the support becomes only a subsolution (in weak sense). We omit these details here, but they will be employed in the future companion paper on the large time behavior of the general solutions to Eq. \eqref{eq1}.

The unique self-similar solution $U_*$ in the form \eqref{SSS} with the profile $f_*$ given by Theorem \ref{th.SSSdc} has interesting properties. For example, if we let $t\to0$ we observe that, for any $x\in\real^N$, we have
$$
\lim\limits_{t\to0}U_*(x,t)=\lim\limits_{t\to0}t^{-\alpha}f_*(|x|t^{\beta})=0.
$$
Thus, the solution $U_*$ has as initial trace a mass concentrated at infinity: indeed, note that, at least for small $\sigma$, its $L^1$-norm is infinite at any $t>0$, but even when $f_*$ is integrable at infinity (that is, when $\sigma>N(p-1)$, taking into account the behavior \eqref{beh.inf}), a simple calculation gives that 
$$
\int_{\real^N}U_*(x,t)\,dx=t^{-\alpha-N\beta}\int_{\real}f_*(\xi)\,d\xi=t^{-\alpha-N\beta}\|f_*\|_1,
$$
and it is obvious that the latter expression tends to $+\infty$ as $t\to0$. Thus, this very interesting solution $U_*$ is exactly the opposite of the more usual very singular solutions concentrated in the origin: it is a kind of \emph{very singular solution concentrated at infinity} and, at $t>0$, focuses towards the origin (reaching it in infinite time). 

We are left with the third and last type of self-similar solutions. In order to present them, let us observe that Eq. \eqref{eq1} admits an unbounded, explicit stationary solution
\begin{equation}\label{stat.sol}
S(x)=K_0|x|^{(\sigma+2)/(m-p)}, \quad K_0:=\left[\frac{(m-p)^2}{m(\sigma+2)[m(N+\sigma)-p(N-2)]}\right]^{1/(m-p)}.
\end{equation}
The fact that $S$ is a solution to Eq. \eqref{eq1} follows by direct calculation. However, since this stationary solution can be seen as a self-similar one in the form \eqref{SSS} as well, it is natural to ask ourselves whether its behavior as $|x|\to0$ can be taken also by some bounded self-similar profiles. The answer is given in our last (but not least with respect to interest) main result.
\begin{theorem}\label{th.SSSzero}
Let $m$, $p$ and $\sigma$ be as in \eqref{range.exp}. Then there exists $K^*\in(0,\infty]$ such that, for any $K\in(0,K^*)$, there are self-similar profiles such that
\begin{equation}\label{beh.zero}
\lim\limits_{\xi\to0}\xi^{-(\sigma+2)/(m-p)}f(\xi)=K
\end{equation}
and they satisfy \eqref{beh.inf} as $\xi\to\infty$. Furthermore, $K^*>K_0$ and any profile satisfying \eqref{beh.zero} and \eqref{beh.inf} has a single maximum point.
\end{theorem}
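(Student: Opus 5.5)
We plan to work with a quadratic autonomous dynamical system obtained from \eqref{SSODE} by a phase-space change of variables, in the spirit of \cite{IM25, IM26}. A natural choice is
$$
X=\frac{m}{\alpha}\xi^{-2}f^{m-1},\quad Y=\frac{m}{\alpha}\xi^{-1}f^{m-2}f',\quad Z=\frac{1}{\alpha}\xi^{\sigma}f^{p-1},
$$
with a new independent variable $\eta$ defined by $d\eta/d\xi=\alpha\xi/(mf^{m-1})$. The behavior \eqref{beh.zero} has $f\sim K\xi^{(\sigma+2)/(m-p)}$. Since
$$
(m-1)\frac{\sigma+2}{m-p}-2=\frac{\sigma(m-1)+2(p-1)}{m-p}>0,
$$
we get $X\to0$ and $Y\to0$ as $\xi\to0$. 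On the other hand, the combination $Z^{m-1}/X^{p-1}$ is constant along $f=K\xi^{(\sigma+2)/(m-p)}$. Hence \eqref{beh.zero} does not correspond to a hyperbolic point but to a non-hyperbolic (center-type) critical point at the origin, or equivalently to a one-parameter family of orbits emanating from a critical set. To make $K$ a genuine parameter, we will instead use variables adapted to the stationary solution \eqref{stat.sol}. Concretely, set
$$
f(\xi)=\xi^{(\sigma+2)/(m-p)}g(s),\quad s=\ln\xi,
$$
for which \eqref{SSODE} becomes an autonomous second-order equation in $g^m$ as $s\to-\infty$, perturbed by the term $\xi^{(\sigma(m-1)+2(p-1))/(m-p)}(\alpha g-\beta g_s-\dots)$, which is exponentially small in $s$. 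The limiting autonomous equation has $g\equiv K_0$ as a critical point, together with the critical point $g=0$.

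\textbf{Step 1: local existence of profiles with \eqref{beh.zero}.} For every $K>0$ we construct a solution near $\xi=0$ with $\xi^{-(\sigma+2)/(m-p)}f\to K$. In the four-dimensional autonomous system (adding the variable $w=\xi^{\theta}$ with $\theta=(\sigma(m-1)+2(p-1))/(m-p)$), the points $(g,g_s,w)=(K,0,0)$ are critical only for $K=K_0$. For $K\neq K_0$, however, $g\to K$ requires the ``stationary'' part to be balanced by the time-derivative part. We therefore expect the behavior \eqref{beh.zero} with $K\neq K_0$ to arise from orbits along which the diffusion term $\Delta u^m$ is lower order. Indeed, $\Delta(\xi^{\gamma m})\sim\xi^{\gamma m-2}$ while $\alpha f-\beta\xi f'-\xi^\sigma f^p\sim\xi^{\gamma}(\alpha-\beta\gamma)K-\xi^{\sigma+\gamma p}K^p$ with $\gamma=(\sigma+2)/(m-p)$. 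The three terms $\xi^{\gamma m-2}$, $\xi^{\sigma+\gamma p}$ (which coincide) and $\xi^{\gamma}$ must be compared, and since $\alpha-\beta\gamma=0$ exactly, the leading balance is precisely $K^{m}\gamma m(\gamma m+N-2)=K^p$ only at $K_0$. The resolution is to look for $f=\xi^\gamma(K+\text{correction})$ where the correction is a resonant homogeneous solution of the linearization at the origin of the $g$-equation. Our plan is therefore to linearize at $g=0$ in the variables $(X,Y,Z)$: the critical point corresponding to \eqref{beh.zero} should be a point of the form $P_0=(0,0,0)$ with a one-dimensional center manifold along which $Z^{m-1}X^{1-p}$ is a conserved leading quantity. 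We then prove, via center-manifold theory combined with a first integral at leading order, that for each $K>0$ there is exactly one orbit, or a controlled family of orbits, leaving $P_0$ with that value.

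\textbf{Step 2: global behavior and the interval $(0,K^*)$.} We show by comparison that for $K$ small the orbit stays close to the orbit of the dead-core profile $f_*$ of Theorem \ref{th.SSSdc}; as $K\to0$ the profiles converge to $f_*$. Since $f_*$ enters the unique stable attractor corresponding to \eqref{beh.inf} (Theorem \ref{th.SSSpos} shows all bounded profiles have this tail), continuity with respect to $K$ gives openness of the set of $K$ for which the profile has \eqref{beh.inf}. Define $K^*$ as the supremum of $K$ such that all $K'\in(0,K)$ give such profiles. Near $K_0$ we compare with $S$: profiles with $K\le K_0$ lie below $S$ near the origin. A sturmian intersection argument with $S$ and with $f^*$ shows that they cannot blow up or change sign; hence they stay bounded and fall into the attractor. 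This yields $K^*>K_0$. The single-maximum property follows by showing that at any critical point $f''<0$ can be derived from \eqref{SSODE} in the region where $\alpha f<\xi^\sigma f^p$ is crossed only once. Equivalently, in phase variables, the plane $Y=0$ is crossed once in the relevant direction.

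\textbf{Main obstacle.} The hardest part is Step 1: identifying correctly the non-hyperbolic critical point encoding \eqref{beh.zero} and proving that orbits leave it with every prescribed $K$. We will try center-manifold reduction first. If degeneracy prevents this, we will instead use a shooting/fixed-point argument for the integral equation satisfied by $g$ near $s=-\infty$, exploiting the exponentially small forcing.
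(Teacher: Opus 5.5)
There is a genuine gap. It sits in your Step 2, not in the local analysis you flagged as the main obstacle. You treat the tail \eqref{beh.inf} as an attractor, deduce openness in $K$, and speak of ``the'' profile with a given $K$. Both premises fail.

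\emph{The target is not an attractor.} By Lemma \ref{lem.Qg}, $Q_{\gamma_0}$ has the positive eigenvalue $(m-p)/(\sigma+2)$, and orbits reach it only along its two-dimensional center manifold. Connecting to it is a codimension-one event, not an open one.

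\emph{The local solution for fixed $K$ is not unique.} The right blow-up is $V=Z/X$: it is $Z/X$, tending to $K^{p-m}/m$, that is constant along $K\xi^{(\sigma+2)/(m-p)}$. Your quantity $Z^{m-1}/X^{p-1}$ equals a constant times $\xi^{\sigma(m-1)+2(p-1)}$ along \emph{every} profile, so it does not detect $K$. After the blow-up, each $K$ is a critical point $Q(v_0)=(0,0,v_0)$ with a one-dimensional unstable direction and a two-dimensional center manifold. Hence a one-parameter family of orbits leaves it. Members near the orbit lying in $X=0$ vanish at a finite point with $(f^m)'<0$ and reach $Q_3$. Other members grow without bound by entering $\mathcal{R}$. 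Both are open conditions.

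\emph{Consequences for your Step 2.} Your Sturmian claim that profiles with $K\le K_0$ ``cannot blow up or change sign'' is false. The stationary profile $K_0\xi^{(\sigma+2)/(m-p)}$ itself satisfies \eqref{beh.zero} with $K=K_0$ and is unbounded. Nothing supports convergence to $f_*$ as $K\to0$. Nothing in your argument concerns $K>K_0$, so $K^*>K_0$ is not obtained.

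\emph{On the local part.} It is the easy part, and your $g$-scaling is reversed. With $f=\xi^{(\sigma+2)/(m-p)}g$ and $\theta=[\sigma(m-1)+2(p-1)]/(m-p)$ one gets $\beta g_s=\xi^{\theta}\big[(g^m)_{ss}+\dots-g^p\big]$. So diffusion and absorption carry the small factor, which is exactly why every $K>0$ is locally admissible.

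The missing idea is a shooting argument inside the fixed-$K$ family, as in the dead-core existence proof. In the paper, one end of the family is the orbit in $X=0$ from $Q(v_0)$ to the saddle $Q_1$ (Lemma \ref{lem.VX0}). Orbits near it follow the unstable manifold of $Q_1$, which lies in $Z=0$, to $Q_3$. The crux is the other end, an orbit entering $\mathcal{R}$. The paper obtains it only for $v_0$ near $v_*=K_0^{p-m}/m$, by continuity from the explicit orbit \eqref{sol.line} of the stationary solution $S$. The sets $\mathcal{A},\mathcal{B},\mathcal{C}$ and Steps 1--7 of Section \ref{sec.existQ1} then give a connection to $Q_{\gamma_0}$ for all $K$ in a neighborhood of $K_0$, whence $K^*>K_0$.

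For $K\in(0,K_0)$ the paper leaves the phase space altogether. It takes the minimal solution of the Cauchy problem with data $K|x|^{(\sigma+2)/(m-p)}$, built as a monotone limit of truncated problems and bounded above by $U_{K_0}$ via comparison. This solution is self-similar because the data and minimality are invariant under $u\mapsto\lambda^{\alpha}u(\lambda^{-\beta}x,\lambda t)$. Its profile is bounded, so it has the tail \eqref{beh.inf} by Theorem \ref{th.SSSpos}.

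Your single-maximum argument is fine in spirit. At $f'=0$ one has $mf^{m-1}f''=\alpha f(Z-1)$, so a local minimum forces the orbit into the positively invariant region $\mathcal{R}$. That is incompatible with reaching $Q_{\gamma_0}$, whose center manifold lies in $Y<0$ near it.
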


\noindent \textbf{Remark.} We expect $K^*<\infty$, as it occurs for $\sigma=0$ and in dimension $N=1$ (see \cite{KPV85}). However, we were unable to prove the non-existence of any bounded self-similar solution satisfying \eqref{beh.zero} and \eqref{beh.inf} for $K$ very large with our techniques.

\medskip 

We thus find that there are actually \emph{many} bounded self-similar profiles having a similar behavior in the origin as the stationary solution \eqref{stat.sol}. It should be moreover noted that the interval of values of $K$ for which the existence of profiles as in Theorem \ref{th.SSSzero} is granted exceeds the explicit constant $K_0$ given in \eqref{stat.sol}. This classification extends the one established in dimension $N=1$ and for $\sigma=0$ in \cite{KPV85, MPV91}.

As we shall see in a forthcoming companion paper, each of these three types of self-similar profiles give rise to self-similar solutions having their own ``basin of attraction" consisting in suitable initial conditions $u_0$ such that the unique solution to Eq. \eqref{eq1} with $u(x,0)=u_0(x)$, $x\in\real^N$, converges to the self-similar solution as $t\to\infty$ in the self-similar time scale. Thus, all these solutions play their role in the dynamical analysis of the equation. Let us also note that, due to the influence of the weight $|x|^{\sigma}$, the constant behavior as $\xi\to\infty$ (typical for $\sigma=0$) is replaced by the specific tail behavior \eqref{beh.inf}, which depends on $\sigma$. 

Let us finally comment that the different profiles in Theorem \ref{th.SSSzero} are a manifestation of the degeneracy of the equation, producing non-uniqueness of solutions to Eq. \eqref{SSODE} when the initial condition is $f(0)=0$, $(f^m)'(0)=0$. With respect to the non-uniqueness of solutions, it has been proved in \cite[Theorem D]{MPV91} that there are infinitely many self-similar profiles satisfying \eqref{beh.zero} together with an unbounded condition (thus, different from \eqref{beh.inf}) as $\xi\to\infty$. In change, there is no proof of either uniqueness or non-uniqueness of the profiles satisfying simultaneously \eqref{beh.zero} and \eqref{beh.inf}, but the uniqueness of such profiles, once fixed $K\in(0,K^*)$, is highly expected.

The existence part of the proof of all the three theorems is based on the study of an alternative formulation of Eq. \eqref{SSODE} in the form of a three-dimensional autonomous dynamical system obtained by a transformation also employed by the authors in \cite{IM25}. The uniqueness of the profile in Theorem \ref{th.SSSpos} follows completely similarly as in \cite{IM26}, while the proof of the uniqueness of the profile in Theorem \ref{th.SSSdc} no longer follows from the study of \eqref{SSODE} or the associated autonomous dynamical system, but employs instead partial differential equations techniques applied to the original equation \eqref{eq1}.

\section{An alternative formulation. Critical points}\label{sec.syst}

In this section, we introduce a transformation mapping the differential equation \eqref{SSODE} into a three-dimensional autonomous dynamical system, which will be the alternative formulation employed throughout the paper. We thus define
\begin{equation}\label{PSchange}
X(\eta):=\frac{m}{\alpha}\xi^{-2}f^{m-1}(\xi),\quad Y(\eta):=\frac{m}{\alpha}\xi^{-1}f^{m-2}(\xi)f'(\xi), \quad Z(\eta):=\frac{1}{\alpha}\xi^{\sigma}f^{p-1}(\xi),
\end{equation}
where we recall that $\alpha$ and $\beta$ are defined in \eqref{SSexp} and $\eta$ is a new independent variable introduced implicitly by
\begin{equation}\label{ind.var}
\frac{d\eta}{d\xi}=\frac{\alpha}{m}\xi f(\xi)^{1-m}.
\end{equation}
By performing direct calculations, we find
\begin{equation}\label{interm0}
\begin{split}
&f'(\xi)=\frac{\alpha}{m}\xi Y(\eta)f(\xi)^{2-m}, \quad (f^m)'(\xi)=\alpha\xi Y(\eta)f(\xi),\\
&(f^m)''(\xi)=\alpha\left(\xi\frac{d}{d\xi}Y(\eta)f(\xi)+\frac{\alpha}{m}\xi^2f(\xi)^{2-m}Y(\eta)^2+Y(\eta)f(\xi)\right),
\end{split}
\end{equation}
and, by replacing the previous expressions in the differential equation \eqref{SSODE}, we are left after algebraic manipulations with the following system (where the first and third equations are derived directly from the definition of $X$ and $Z$ by applying the chain rule)
\begin{equation}\label{PSsyst}
\left\{\begin{array}{ll}\dot{X}=X[(m-1)Y-2X],\\
\dot{Y}=-Y^2+\frac{\beta}{\alpha}Y-X-NXY+XZ,\\
\dot{Z}=Z[(p-1)Y+\sigma X],\end{array}\right.
\end{equation}
where the dot derivatives are taken with respect to $\eta$. The transformation \eqref{PSchange} has been employed by the authors as a secondary one in their previous work \cite{IM25}. Note that the non-negativity of $f$ gives $X(\eta)\geq0$ and $Z(\eta)\geq0$ for any $\eta\in\real$, while the planes $X=0$ and $Z=0$ are invariant for the system \eqref{PSsyst}. We thus work throughout the paper in the region 
$$
(X,Y,Z)\in[0,\infty)\times\mathbb{R}\times[0,\infty).	$$
The critical points of the system \eqref{PSsyst} contained in this region are
\begin{equation}\label{critp}
Q_0=(0,0,0), \quad Q_1=\left(0,\frac{\beta}{\alpha},0\right), \quad Q_{\gamma}=(0,0,\gamma), \quad \gamma>0.
\end{equation}

\subsection{Analysis of the critical points}

We first analyze the flow of the system \eqref{PSsyst} in a neighborhood of the critical points listed in \eqref{critp}.
\begin{lemma}\label{lem.Q0}
The critical point $Q_0$ is a non-hyperbolic point with a one-dimensional unstable manifold and two-dimensional center manifolds with unstable flow, composing a center-unstable three dimensional manifold. The trajectories going out of it correspond to profiles satisfying \eqref{beh.zero} as $\xi\to0$.
\end{lemma}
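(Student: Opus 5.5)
The plan is to linearize \eqref{PSsyst} at $Q_0$, reduce the dynamics to a two-dimensional center manifold, show that the reduced flow is unstable for $X>0$, and then translate the resulting asymptotics back to the profile $f$ through \eqref{PSchange}.

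\textbf{Linearization.} The Jacobian of \eqref{PSsyst} at $Q_0$ has a single nonzero entry in the linear part:
$$
\dot X\approx 0,\qquad \dot Y\approx \frac{\beta}{\alpha}Y-X,\qquad \dot Z\approx 0 .
$$
Its eigenvalues are therefore $\lambda_1=\lambda_3=0$ and $\lambda_2=\beta/\alpha>0$.

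\textbf{Unstable manifold.} The eigenvector for $\lambda_2$ is $e_2=(0,1,0)$. The corresponding one-dimensional unstable manifold lies in the invariant line $X=Z=0$. Trajectories on it have $X\equiv 0$, which forces $f\equiv 0$, so they do not correspond to nontrivial profiles and can be discarded.

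\textbf{Center manifold.} The center eigenspace is spanned by $(1,\alpha/\beta,0)$ and $(0,0,1)$. By the center manifold theorem, any center manifold is a graph
$$
Y=h(X,Z)=\frac{\alpha}{\beta}X+O\big(|(X,Z)|^2\big).
$$
This graph is only defined locally and need not be unique, which is why the statement speaks of center manifolds.

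\textbf{Reduced flow.} Substituting the graph into the first and third equations gives
$$
\dot X=\frac{(m-1)\alpha-2\beta}{\beta}X^2+\dots,\qquad \dot Z=\frac{(p-1)\alpha+\sigma\beta}{\beta}XZ+\dots
$$
From \eqref{SSexp} one checks that $(m-1)\alpha-2\beta=1$ and $(p-1)\alpha+\sigma\beta=1$, since both numerators reduce to $\sigma(m-1)+2(p-1)$. The reduced system is thus
$$
\dot X=\frac{X^2}{\beta}+\dots,\qquad \dot Z=\frac{XZ}{\beta}+\dots
$$
For $X>0$ the flow moves away from the origin. I would make this rigorous with the standard theory of non-hyperbolic points with a quadratic leading part, or by a direct comparison argument on the reduced equation. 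The consequence is that every orbit on the center manifolds with $X>0$ leaves $Q_0$ as $\eta\to-\infty$. Together with the unstable direction, this gives a three-dimensional center-unstable manifold whose orbits all emanate from $Q_0$.

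Integrating the reduced flow backward gives $X(\eta)\sim -\beta/\eta$ as $\eta\to-\infty$. Moreover $dZ/dX=Z/X+\dots$, so $Z/X\to L\in[0,\infty)$ along each such orbit, and $L>0$ whenever $Z>0$.

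\textbf{Translation to profiles.} The main obstacle is to check that these asymptotics correspond to $\xi\to0$ and to extract \eqref{beh.zero}. From \eqref{ind.var} and \eqref{PSchange} one has
$$
\frac{d\eta}{d\xi}=\frac{1}{\xi X},\qquad\text{so}\qquad \frac{d\ln\xi}{d\eta}=X\sim-\frac{\beta}{\eta}.
$$
Integrating, $\ln\xi\sim-\beta\ln|\eta|\to-\infty$, hence $\xi\to0$ as $\eta\to-\infty$. Next,
$$
\frac{Z}{X}=\frac{\xi^{\sigma+2}f^{p-1-(m-1)}}{m}=\frac{\xi^{\sigma+2}f^{p-m}}{m}\to L,
$$
which gives $\xi^{-(\sigma+2)/(m-p)}f(\xi)\to (mL)^{-1/(m-p)}=:K>0$. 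This is exactly \eqref{beh.zero}.

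As a consistency check, $Y/X\to\alpha/\beta$ translates into $\xi f'/f\to(\sigma+2)/(m-p)$, matching the power law. This relation reduces to $\alpha(m-p)=\beta(\sigma+2)$, which holds by \eqref{SSexp}.

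Finally, one must exclude the degenerate orbits on the center manifolds with $Z\equiv0$, since they correspond to $f\equiv 0$, and those with $L=0$. For $Z>0$ I expect to exclude $L=0$ from the comparison $\dot Z/Z\approx\dot X/X$ near $Q_0$, which forces $Z/X$ to have a positive limit. This is the most delicate point and would be done by estimating the error terms in the reduced equations.
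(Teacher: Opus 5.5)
Your proposal is correct and matches the paper up to the reduced system. The linearization and the center-manifold reduction $\dot X=X^2/\beta$, $\dot Z=XZ/\beta$ are the same; your identities $(m-1)\alpha-2\beta=(p-1)\alpha+\sigma\beta=1$ are what produce the paper's coefficients. So are the integration $X\sim-\beta/\eta$ and the deduction $\xi\to0$ from $\xi'/\xi=X$.

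The two routes differ in the translation back to the profile. The paper uses only $\frac{\beta}{\alpha}Y-X=W=O(|(X,Z)|^2)$ to get $\xi f'/f=Y/X\to(\sigma+2)/(m-p)$, and then invokes L'H\^opital's rule. You instead track $Z/X=\frac1m\xi^{\sigma+2}f^{p-m}$, whose limit $L$ gives $K=(mL)^{-1/(m-p)}$ directly and ties $K$ to the orbit. This is the correspondence $\lim Z/X=K^{p-m}/m$ that the paper itself uses in Section 7.

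Your route is the sharper one. L'H\^opital applied to $(\ln f)'/(\ln\xi)'$ only yields $\ln f/\ln\xi\to(\sigma+2)/(m-p)$. That does not give a finite positive limit in \eqref{beh.zero}: think of $\xi^{a}\ln(1/\xi)$. What is needed is integrability of $\xi f'/f-(\sigma+2)/(m-p)$ against $d\xi/\xi$, and that is exactly the step you left as ``delicate''.

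That step closes quickly, and you should write it out rather than appeal to $dZ/dX=Z/X+\dots$. On its own that relation does not rule out a logarithmic drift of $Z/X$. From \eqref{PSsyst}, $\frac{d}{d\eta}\ln\frac{Z}{X}=(p-m)Y+(\sigma+2)X=-(\sigma+2)W$, so the first-order terms cancel identically.

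The $Z$-axis consists of equilibria, so it lies in every center manifold; hence $W=X\,O(|(X,Z)|)$. Then $\frac{d}{d\eta}\ln Z=\frac{X}{\beta}\,(1+O(|(X,Z)|))$ together with $X\sim\beta/|\eta|$ gives $Z\le C|\eta|^{-1+\varepsilon}$. Therefore $W$ is integrable on $(-\infty,\eta_0)$, and $\ln(Z/X)$ converges to a finite number.

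This gives $L\in(0,\infty)$ at once for every orbit with $Z>0$. You do not need a separate argument against $L=0$. The same estimate also excludes $L=\infty$, which your ``$L\in[0,\infty)$'' silently assumes away.

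One small correction: orbits with $Z\equiv0$, $X>0$ do not correspond to $f\equiv0$ but to no profile at all. Indeed $X>0$ forces $f>0$, and hence $Z>0$.
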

\begin{proof}
The linearization of the system \eqref{PSsyst} in a neighborhood of $Q_0$ has the matrix
$$
M(Q_0)=\left(
         \begin{array}{ccc}
           0 & 0 & 0 \\
           -1 & \frac{\beta}{\alpha} & 0 \\
           0 & 0 & 0 \\
         \end{array}
       \right).
$$
There is thus a one-dimensional unstable manifold corresponding to the eigenvalue $\lambda_2=\beta/\alpha>0$, tangent to the eigenvector $e_2=(0,1,0)$ and thus contained in the $Y$-axis (which is invariant for the system), according to the uniqueness of the unstable manifold in \cite[Theorem 3.2.1]{GH}. In order to study the center manifolds near $Q_0$, we perform the change of variable
$$
W=\frac{\beta}{\alpha}Y-X, \quad {\rm that \ is,} \quad Y=\frac{\sigma+2}{m-p}(W+X),
$$
and after direct calculations obtain the system
\begin{equation}\label{PSsystQ0}
\left\{\begin{array}{ll}\dot{X}=\frac{1}{\beta}X^2+\frac{(m-1)(\sigma+2)}{m-p}XW,\\[1mm]
\dot{W}=\frac{m-p}{\sigma+2}W+\frac{m-p}{\sigma+2}XZ-\frac{[m(N+\sigma)-p(N-2)]}{m-p}X^2-\frac{N(m-p)+(\sigma+2)(m+1)}{m-p}XW-\frac{\sigma+2}{m-p}W^2,\\[1mm]
\dot{Z}=\frac{1}{\beta}XZ+\frac{(p-1)(\sigma+2)}{m-p}ZW.\end{array}\right.
\end{equation}
Looking for a center manifold whose second order expansion in a neighborhood of the origin is
$$
W=aX^2+bXZ+cZ^2+O(|(X,Z)|^3),
$$
according to \cite[Theorem 3, Section 2.5]{Carr}, we deduce by computing the flow of the system \eqref{PSsystQ0} on the previous surface and equating to zero the quadratic terms that
$$
W=\frac{[m(N+\sigma)-p(N-2)](\sigma+2)}{(m-p)^2}X^2-XZ+O(|(X,Z)|^3),
$$
whence the center manifolds in a neighborhood of $Q_0$ have the equation
\begin{equation}\label{cmQ0}
Y=\frac{\sigma+2}{m-p}(X-XZ)+\frac{[m(N+\sigma)-p(N-2)](\sigma+2)^2}{(m-p)^3}X^2+O(|(X,Z)|^3).
\end{equation}
The direction of the flow of the system \eqref{PSsyst} on these center manifolds is given by the reduced system (according to \cite[Theorem 2, Section 2.4]{Carr})
\begin{equation}\label{redsyst}
\left\{\begin{array}{ll}\dot{X}=\frac{1}{\beta}X^2+O(|(X,Z)|^3),\\[1mm]
\dot{Z}=\frac{1}{\beta}XZ+O(|(X,Z)|^3),\end{array}\right.
\end{equation}
thus any center manifold has an unstable direction of the flow. Together with the unique unstable manifold contained in the $Y$-axis, we obtain a three-dimensional center-unstable manifold in a neighborhood of $Q_0$. In order to deduce the local behavior of the profiles corresponding to the orbits contained in this center-unstable manifold, it is sufficient to note that
$$
\frac{m-p}{\sigma+2}Y-X=W=O(|(X,Z)|^2),
$$
hence, by undoing \eqref{PSchange} and performing easy manipulations,
\begin{equation}\label{interm2}
\lim\limits_{\eta\to-\infty}\frac{Y(\eta)}{X(\eta)}=\frac{\sigma+2}{m-p}.
\end{equation}
We first have to check that the limit as $\eta\to-\infty$ translates into a limit as $\xi\to0$ when undoing the change of variable \eqref{PSchange}. To this end, we observe that, by inverting the implicit definition of $\eta$ in \eqref{ind.var}, we have
\begin{equation}\label{interm1}
\frac{\xi'(\eta)}{\xi(\eta)}=X(\eta).
\end{equation}
In a neighborhood of $Q_0$ and on its center manifold, the local behavior of $X(\eta)$ is given by the first equation of the reduced system \eqref{redsyst}, which readily gives by integration
$$
X(\eta)\sim-\frac{\beta}{\eta}, \quad {\rm as} \ \eta\to-\infty,
$$
and the latter expression, together with \eqref{interm1}, lead to $\xi(\eta)\to0$ as $\eta\to-\infty$. We thus deduce from \eqref{interm2} that
$$
\lim\limits_{\xi\to0}\frac{\xi f'(\xi)}{f(\xi)}=\frac{\sigma+2}{m-p},
$$
hence
$$
\lim\limits_{\xi\to0}\frac{(\ln\,f(\xi))'}{(\ln\,\xi)'}=\frac{\sigma+2}{m-p},
$$
and an application of the L'Hopital's rule readily leads to the local behavior \eqref{beh.zero}, completing the proof.
\end{proof}
We next analyze the flow of the system \eqref{PSsyst} in the neighborhood of the critical point $Q_1$.
\begin{lemma}\label{lem.Q1}
The critical point $Q_1$ is a (hyperbolic) saddle point, with a one-dimensional stable manifold contained in the $Y$-axis and a two-dimensional unstable manifold with a first order approximation given by the one-parameter family
\begin{equation}\label{var.Q1}
(r_C): \left\{\begin{array}{ll}Y(\eta)=\frac{\beta}{\alpha}-\frac{\alpha}{m\beta}\left(1+\frac{N\beta}{\alpha}\right)X(\eta)+o(|X(\eta),Z(\eta)|),\\
Z(\eta)=CX(\eta)^{(p-1)/(m-1)}+o\left(X(\eta)^{(p-1)/(m-1)}\right),\end{array}\right.
\end{equation}
as $\eta\to-\infty$, for arbitrary $C\in[0,\infty)$. The trajectories contained in the unstable manifold of $Q_1$ correspond to profiles presenting the dead-core behavior \eqref{beh.dc}, with the left edge of the support $\xi_0(C)\in(0,\infty)$ given by the following correspondence with $C\in(0,\infty)$:
\begin{equation}\label{corresp}
\xi_0(C)=\left(\frac{m}{\alpha}\right)^{(p-1)/L}(C\alpha)^{(m-1)/L}, \quad L=\sigma(m-1)+2(p-1).
\end{equation}
Moreover, we have the following more precise local behavior at the interface point:
\begin{equation}\label{beh.interf}
f(\xi)\sim\left[\frac{\beta(m-1)}{m}\xi_0\right]^{1/(m-1)}(\xi-\xi_0)^{1/(m-1)}
\end{equation}
as $\xi\to\xi_0$, $\xi>\xi_0$.
\end{lemma}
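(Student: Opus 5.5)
The plan follows the same pattern as the proof of Lemma \ref{lem.Q0}: a linearization at $Q_1$, then a reading of the orbits of the unstable manifold back in the variables of \eqref{PSchange}.

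\textbf{Linearization at $Q_1$.} At $Q_1=(0,\beta/\alpha,0)$ the Jacobian of \eqref{PSsyst} is
$$
M(Q_1)=\left(\begin{array}{ccc} \frac{(m-1)\beta}{\alpha} & 0 & 0\\ -1-\frac{N\beta}{\alpha} & -\frac{\beta}{\alpha} & 0\\ 0 & 0 & \frac{(p-1)\beta}{\alpha}\end{array}\right),
$$
with eigenvalues $\lambda_1=(m-1)\beta/\alpha>0$, $\lambda_2=-\beta/\alpha<0$ and $\lambda_3=(p-1)\beta/\alpha>0$. Hence $Q_1$ is a hyperbolic saddle.
\begin{itemize}
\item The stable manifold is one-dimensional and tangent to $e_2=(0,1,0)$. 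Since the $Y$-axis is invariant, uniqueness of the stable manifold \cite[Theorem 3.2.1]{GH} places it on that axis.
\item The unstable manifold is two-dimensional and tangent to the plane spanned by $e_1=(1,-\frac{\alpha}{m\beta}(1+\frac{N\beta}{\alpha}),0)$ and $e_3=(0,0,1)$. The vector $e_1$ comes from solving $(-1-N\beta/\alpha)-(\beta/\alpha)y=\lambda_1y$, which gives $y(m\beta/\alpha)=-(1+N\beta/\alpha)$. This yields the first line of \eqref{var.Q1}.
\item To first order the unstable orbits behave like $X\sim k_1e^{\lambda_1\eta}$ and $Z\sim k_3e^{\lambda_3\eta}$. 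Eliminating $\eta$ gives $Z\sim CX^{\lambda_3/\lambda_1}=CX^{(p-1)/(m-1)}$ with $C\ge0$. The value $C=0$ is the orbit in the invariant plane $Z=0$, and the limit $C=\infty$ would be the orbit in $X=0$, which has no meaning as a profile.
\item Since $\lambda_3/\lambda_1<1$, the $Z$ term dominates. The $o(\cdot)$ corrections need some care here, because the eigenvalues might be resonant ($\lambda_1=k\lambda_3$). I would handle this either with Hartman–Grobman plus a direct argument on the invariant structure, or simply by noting that the first-order behaviour \eqref{var.Q1} is all that is needed.
\end{itemize}

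\textbf{Translation to profiles.} By \eqref{interm1}, $\xi'/\xi=X\to0$ exponentially as $\eta\to-\infty$, so $\xi(\eta)\to\xi_0\in(0,\infty)$: a finite positive limit, unlike at $Q_0$. Since $X=\frac{m}{\alpha}\xi^{-2}f^{m-1}\to0$ with $\xi\to\xi_0>0$, we get $f\to0$ as $\xi\to\xi_0^+$. Next:
\begin{itemize}
\item Compute $Z/X^{(p-1)/(m-1)}=\frac1\alpha\xi^{\sigma}f^{p-1}\bigl(\frac{\alpha}{m}\xi^2f^{1-m}\bigr)^{(p-1)/(m-1)}$. The powers of $f$ cancel, leaving $\frac1\alpha(\frac{\alpha}{m})^{(p-1)/(m-1)}\xi^{\sigma+2(p-1)/(m-1)}$.
\item Letting $\xi\to\xi_0$, this gives $C=\frac1\alpha(\frac{\alpha}{m})^{(p-1)/(m-1)}\xi_0^{L/(m-1)}$ with $L=\sigma(m-1)+2(p-1)$.
\item Inverting yields exactly \eqref{corresp}, which is a bijection between $C\in(0,\infty)$ and $\xi_0\in(0,\infty)$.
\end{itemize}

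\textbf{Interface behavior and dead core.} From $Y\to\beta/\alpha$ we get $\frac{m}{\alpha}\xi^{-1}f^{m-2}f'\to\beta/\alpha$, that is, $(f^{m-1})'\to\frac{(m-1)\beta}{m}\xi_0$. Integrating from $\xi_0$ (L'Hôpital) gives $f^{m-1}(\xi)\sim\frac{(m-1)\beta}{m}\xi_0(\xi-\xi_0)$, which is \eqref{beh.interf}. Consequently
$$
(f^m)'=\frac{m}{m-1}f\,(f^{m-1})'\sim \mathrm{const}\cdot f\to0,
$$
so the interface condition $(f^m)'(\xi_0)=0$ in \eqref{beh.dc} holds. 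Extending $f$ by zero on $[0,\xi_0]$ then produces the dead-core profile; this step uses the requirement discussed after Theorem \ref{th.SSSdc}.

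\textbf{Main obstacle.} I expect the hardest step to be justifying rigorously that $\xi(\eta)$ has a finite positive limit and that \eqref{var.Q1} holds along every orbit of the unstable manifold, including the $o$-terms. The delicate case is when $\lambda_1/\lambda_3$ is an integer. I would first try to use only the exponential decay $X=O(e^{\lambda_1\eta})$, which follows from the stable-manifold theorem, together with the exact identity computed above: that identity shows $Z/X^{(p-1)/(m-1)}$ is an explicit function of $\xi$ alone, so $C$ is defined without needing any asymptotic expansion.
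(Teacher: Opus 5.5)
Your proposal is correct and follows essentially the same route as the paper. It uses the same linearization at $Q_1$ (eigenvalues and eigenvectors), derives $Z\sim CX^{(p-1)/(m-1)}$ from the $X$ and $Z$ equations, gets $\xi\to\xi_0\in(0,\infty)$ from $\xi'/\xi=X$, uses the same cancellation of the powers of $f$ (the paper's \eqref{interm3}) to obtain \eqref{corresp}, and applies L'H\^opital for \eqref{beh.interf}. Your worry about resonant eigenvalues is harmless: the planes $X=0$ and $Z=0$ are invariant, so $\dot X/X\to\lambda_1$ and $\dot Z/Z\to\lambda_3$ exponentially fast, which already gives $X\sim k_1e^{\lambda_1\eta}$ and $Z\sim k_3e^{\lambda_3\eta}$ rigorously.
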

\begin{proof}
The linearization of the system \eqref{PSsyst} in a neighborhood of $Q_1$ has the matrix
$$
M(Q_1)=\left(
         \begin{array}{ccc}
           \frac{(m-1)\beta}{\alpha} & 0 & 0 \\[1mm]
           -1-\frac{N\beta}{\alpha} & -\frac{\beta}{\alpha} & 0 \\[1mm]
           0 & 0 & \frac{(p-1)\beta}{\alpha} \\
         \end{array}
       \right),
$$
having thus one negative eigenvalue $\lambda_2=-\beta/\alpha$ with corresponding eigenvector $e_2=(0,1,0)$ and two positive eigenvalues with corresponding eigenvectors
\begin{equation}\label{eigen.Q1}
\lambda_1=\frac{(m-1)\beta}{\alpha}, \ e_1=\left(1,-\frac{\alpha+N\beta}{m\beta},0\right), \quad \lambda_3=\frac{(p-1)\beta}{\alpha}, \ e_3=(0,0,1).
\end{equation}
Thus, there exists a two-dimensional unstable manifold in a neighborhood of $Q_1$ tangent to the plane spanned by the eigenvectors $e_1$ and $e_3$, according to the Stable Manifold Theorem \cite[Theorem, Section 2.7]{Pe}. Taking into account the expression of $e_1$ and $e_3$ in \eqref{eigen.Q1}, we readily deduce the first asymptotic equality in \eqref{var.Q1}. In order to derive the connection between $Z(\eta)$ and $X(\eta)$, we translate $Q_1$ to the origin by setting $Y=\overline{Y}+\beta/\alpha$ and note that the first and third equations of the system \eqref{PSsyst} become
$$
\left\{\begin{array}{ll}\dot{X}=\frac{(m-1)\beta}{\alpha}X+X[(m-1)\overline{Y}-2X],\\
\dot{Z}=\frac{(p-1)\beta}{\alpha}Z+Z[(p-1)\overline{Y}+\sigma X]\end{array}\right.
$$
Thus, a simple integration of the linear approximation in the origin of the previous system leads to the second asymptotic equality in \eqref{var.Q1}. With respect to the local behavior of the profiles, we first observe from the latter system that, in a neighborhood of $Q_1$, we have
$$
X(\eta)\sim Ce^{(m-1)\beta\eta/\alpha}, \quad {\rm as} \ \eta\to-\infty, \quad C>0,
$$
and we infer from \eqref{interm1} and a direct integration that $\xi(\eta)\to\xi_0\in(0,\infty)$ as $\eta\to-\infty$ on the trajectories contained in the unstable manifold of $Q_1$. By undoing the change of variable \eqref{PSchange} on the fact that $X(\eta)\to0$ as $\eta\to-\infty$ and on the first equation of \eqref{var.Q1}, we obtain
$$
\lim\limits_{\xi\to\xi_0}\xi^{-2}f^{m-1}(\xi)=0, \quad \lim\limits_{\xi\to\xi_0}\frac{m}{(m-1)\alpha\xi}(f^{m-1})'(\xi)=\frac{\beta}{\alpha},
$$
which gives first that $f(\xi_0)=0$ and then the behavior \eqref{beh.interf}. Indeed, we deduce from L'Hopital's rule that
$$
\lim\limits_{\xi\to\xi_0}\frac{f^{m-1}(\xi)}{\xi-\xi_0}=\lim\limits_{\xi\to\xi_0}(f^{m-1})'(\xi)=\frac{\beta(m-1)}{m}\xi_0,
$$ 
which is equivalent to \eqref{beh.interf}. In particular, we also have
$$
(f^m)'(\xi_0)=\frac{m}{m-1}f(\xi_0)(f^{m-1})'(\xi_0)=0,
$$
which establishes the behavior \eqref{beh.dc} in a neighborhood of $\xi_0$. Finally, fix $C\in(0,\infty)$ and denote by $\xi_0(C)$ the left edge of the support of the profile corresponding to the trajectory $r_C$ in the notation of \eqref{var.Q1}. We deduce from the second equality in \eqref{var.Q1} and \eqref{PSchange} that
\begin{equation}\label{interm3}
\frac{1}{\alpha}\xi^{\sigma}f^{p-1}(\xi)\sim C\left(\frac{m}{\alpha}\right)^{(p-1)/(m-1)}\xi^{-2(p-1)/(m-1)}f^{p-1}(\xi),
\end{equation}
as $\xi\to\xi_0(C)$, $\xi>\xi_0(C)$. The correspondence \eqref{corresp} readily follows from \eqref{interm3} and the positivity of $f(\xi)$ in a right neighborhood of $\xi_0(C)$, completing the proof.
\end{proof}
We are left with the analysis of the critical points $Q_{\gamma}$ with $\gamma\in(0,\infty)$, which is gathered in the next result.
\begin{lemma}\label{lem.Qg}
There exists a unique value
$$
\gamma_0=\frac{1}{\alpha(p-1)}>1
$$
such that the critical point $Q_{\gamma_0}$ has a unique two-dimensional center manifold with stable direction of the flow. The trajectories entering $Q_{\gamma_0}$ correspond to profiles with the behavior \eqref{beh.inf} as $\xi\to\infty$. For any $\gamma\in(0,\infty)\setminus\{\gamma_0\}$, there are no trajectories connecting to $Q_{\gamma}$ outside the invariant plane $X=0$.
\end{lemma}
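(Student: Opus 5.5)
The plan is to linearize the system \eqref{PSsyst} at a generic point $Q_{\gamma}=(0,0,\gamma)$ and reduce to a two-dimensional center manifold, where the line of equilibria $\{X=Y=0\}$ forces a careful second-order analysis.

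\textbf{Linearization.} At $(0,0,\gamma)$ the first row of the Jacobian vanishes. The second row is $(\gamma-1,\beta/\alpha,0)$ and the third is $(\sigma\gamma,(p-1)\gamma,0)$. The eigenvalues are therefore $0,0,\beta/\alpha$. There is a one-dimensional unstable manifold tangent to $(0,1,(p-1)\gamma\alpha/\beta)$, which lies in the invariant plane $X=0$. Consequently, trajectories connecting to $Q_\gamma$ from the region $X>0$ must do so (as $\eta\to\infty$) along a two-dimensional center manifold. This manifold contains the line of equilibria $X=Y=0$.

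\textbf{Center manifold and reduced flow.} Set $Z=\gamma+T$ and look for the center manifold as $Y=h(X,T)$. Since the line $X=0$ consists of equilibria and the plane $X=0$ is invariant, $h(0,T)=0$. From the $\dot Y$ equation, the leading terms give
$$
Y=-\frac{\alpha}{\beta}(\gamma+T-1)X+O(X^2).
$$
Substituting into the first and third equations of \eqref{PSsyst}, the reduced system (via \cite[Theorem 2, Section 2.4]{Carr}) reads
$$
\dot X=-X^2\Big[\frac{(m-1)\alpha}{\beta}(\gamma+T-1)+2\Big]+\dots,\qquad \dot T=(\gamma+T)X\Big[\sigma-\frac{(p-1)\alpha}{\beta}(\gamma+T-1)\Big]+\dots
$$

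\textbf{Case $\gamma\neq\gamma_0$.} The bracket in $\dot T$ is a nonzero constant $c_\gamma$ near $T=0$. We then have two possibilities.
\begin{itemize}
\item If the coefficient $k_\gamma$ of $-X^2$ in $\dot X$ is nonpositive, then $X$ does not decrease to $0$.
\item Otherwise, dividing gives $dT/dX\sim -\gamma c_\gamma/(k_\gamma X)$, so $T\sim -(\gamma c_\gamma/k_\gamma)\ln X$, which diverges as $X\to0$.
\end{itemize}
In either case no orbit with $X>0$ reaches $Q_\gamma$.

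\textbf{Case $\gamma=\gamma_0$.} Vanishing of $c_\gamma$ at $T=0$ means $\gamma-1=\sigma\beta/((p-1)\alpha)$. Using $1-\alpha(p-1)=\sigma(m-p)/L=\sigma\beta$, this is exactly $\gamma=\gamma_0=1/(\alpha(p-1))$, and $\gamma_0>1$ follows. At $\gamma_0$ we have:
\begin{itemize}
\item $\dot X=-kX^2+\dots$ with $k=(m-1)\sigma/(p-1)+2>0$;
\item $\dot T=-aXT+O(X^2)$ with $a=(p-1)\alpha\gamma_0/\beta>0$.
\end{itemize}
Hence $T=CX^{a/k}+O(X)\to0$, so the flow on the center manifold is stable. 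I would justify uniqueness of this center manifold by the standard argument that center manifolds are unique on the side where the flow converges to the critical point.

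\textbf{Translation to profiles.} On these orbits $X(\eta)\sim1/(k\eta)$ as $\eta\to\infty$. Then \eqref{interm1} gives $\xi(\eta)\to\infty$, and $Z\to\gamma_0$ means $\xi^\sigma f^{p-1}\to\alpha\gamma_0=1/(p-1)$, which is \eqref{beh.inf}.

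The main obstacle is the non-isolated, non-hyperbolic nature of $Q_\gamma$: one must ensure that the $O(X^2)$ corrections in $h$ do not alter the logarithmic divergence for $\gamma\neq\gamma_0$. This is why the expansion above is kept to second order in $X$ uniformly for $T$ near $0$.
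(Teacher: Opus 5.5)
Your proposal is correct and follows essentially the paper's route: translate $Q_\gamma$ to the origin and reduce to the two-dimensional center manifold, which is forced because the only nonzero eigenvalue $\beta/\alpha$ is positive. You then observe that the $O(X)$ coefficient $\gamma c_\gamma=\gamma[\sigma-\frac{(p-1)\alpha}{\beta}(\gamma-1)]$ of $\dot T$ (the zero-order term in the paper's desingularized system) vanishes only at $\gamma_0$, where the flow is a stable node; your use of $h(0,T)\equiv0$ neatly replaces the paper's canonical change of variables and its inductive claim that pure powers of $V$ are absent. The one loose point is your first bullet when $k_\gamma=0$, which occurs at $\gamma=\frac{\sigma(m-1)+2(p-1)}{(m-1)(\sigma+2)}\in(0,1)$: there ``$X$ does not decrease to $0$'' is not automatic, but the paper's desingularization $d\nu=X\,d\eta$, or the bound $d\ln X/dT=(-k_\gamma+o(1))/(\gamma c_\gamma+o(1))$ while $T$ drifts monotonically, rules out convergence to $Q_\gamma$ for every sign of $k_\gamma$.
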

\begin{proof}
The proof follows similar ideas as the one of \cite[Lemma 2.4]{ILS24} and is very technical. Thus, for the reader's convenience, it is divided into several steps.

\medskip

\noindent \textbf{Step 1. Translation at the origin.} Pick $\gamma\in(0,\infty)$. We perform first the change of variable $Z=T+\gamma$, which maps $Q_{\gamma}$ to the origin of the following system:
\begin{equation}\label{syst.interm}
\left\{\begin{array}{ll}\dot{X}=X[(m-1)Y-2X],\\\dot{Y}=-Y^2-NXY+(\gamma-1)X+\frac{m-p}{\sigma+2}Y+XT,\\
\dot{T}=(T+\gamma)[(p-1)Y+\sigma X].\end{array}\right.
\end{equation}
Let us note that the linearization of the system \eqref{syst.interm} has a matrix with eigenvalues $\lambda_1=\lambda_3=0$ and $\lambda_2=(m-p)/(\sigma+2)>0$, thus we have to work further in order to study the two-dimensional center manifold corresponding to the zero eigenvalues.

\medskip

\noindent \textbf{Step 2. Canonical form.} We next perform the following change of variable, with the aim of putting the system \eqref{syst.interm} in a canonical form for the application of the center manifold theorem. On the one hand, in the second equation of \eqref{syst.interm} we introduce the new variable
$$
W=\frac{m-p}{\sigma+2}Y+(\gamma-1)X, \quad {\rm that \ is}, \quad Y=\frac{\sigma+2}{m-p}[W+(1-\gamma)X].
$$
On the other hand, we also have to remove the linear terms in the last equation of \eqref{syst.interm} and we start by the term in $Y$, thus we replace $T$ by the new variable
$$
V=T-kY, \quad k=\frac{(p-1)(\sigma+2)\gamma}{m-p},
$$
where the choice of $k$ is obtained by calculations in order to achieve the goal of reducing the linear term in $Y$ (for a more detailed calculation at this point, see for example the analogous one in the proof of \cite[Lemma 2.4]{IS21}). Performing all these changes of variable at the same time, we obtain the following (rather complicated) system:
\begin{equation}\label{syst.interm2}
\left\{\begin{array}{ll}\dot{X}=\left[\frac{(m-1)(\sigma+2)(1-\gamma)}{m-p}-2\right]X^2+\frac{(m-1)(\sigma+2)}{m-p}XW,\\
\dot{W}=\frac{m-p}{\sigma+2}W-\frac{\sigma+2}{m-p}W^2+\frac{m-p}{\sigma+2}VX+\frac{D_1}{m-p}X^2-\frac{D_2}{m-p}XW,\\
\dot{V}=[\gamma\sigma+k(1-\gamma)]X-\frac{D_3}{(m-p)^3}X^2+\frac{D_4}{m-p}XV+O(|(X,V)|)W,\end{array}\right.
\end{equation}
where we omit for simplicity the terms in $W$ in the last equation, as they will not be important in the sequel, and with
\begin{equation*}
\begin{split}
&D_1=(\gamma-1)[(N-2)(m-p)+m(1-\gamma)(\sigma+2)-(\sigma+2)(p-1)\gamma],\\
&D_2=N(m-p)+(\sigma+2)[m+1-\gamma(m+p)],\\
&D_3=(p-1)\gamma(\gamma-1)(\sigma+2)^2[(N+\sigma)(m-p)+(\sigma+2)(p+\gamma-p\gamma)],\\
&D_4=\sigma(m-p)+(p-1)(\sigma+2)(1-2\gamma).
\end{split}
\end{equation*}

\medskip

\noindent \textbf{Step 3. Analysis of the center manifolds.} Any center manifold of the critical point $Q_{\gamma}$ is thus of the form
$$
W=aX^2+bXV+cV^2+o(|(X,V)|^2),
$$
in view of \cite[Theorem 3, Section 2.5]{Carr}, where the coefficients $a$, $b$ and $c$ can be straightforwardly computed by requiring that the terms in the calculation of the flow of the system \eqref{syst.interm2} across the approximation of the center manifold are only of higher order than quadratic. We thus find a rather complicated expression for $a$ (that we omit here), $b=-1$ and $c=0$. It is rather obvious intuitively (due to the lack of such terms in the system \eqref{syst.interm2}) and can be proved by induction that pure powers of $V$ are missing from the expansion of $W$ to any order, thus we can write
$$
W=aX^2-XV+Xo(|(X,V)|).
$$
In order to establish the flow on the center manifolds, we proceed as in \cite[Theorem 2, Section 2.4]{Carr} and derive the reduced system in variables $(X,V)$ on the manifold by substituting $W$ by its expansion in the first and third equations of the system \eqref{syst.interm2} and keeping only the quadratic terms. We thus get
\begin{equation}\label{redsyst2}
\left\{\begin{array}{ll}\dot{X}=\left[\frac{(m-1)(\sigma+2)(1-\gamma)}{m-p}-2\right]X^2+X^2O(|(X,V)|),\\
\dot{V}=[\gamma\sigma+k(1-\gamma)]X-\frac{D_3}{(m-p)^3}X^2+\frac{D_4}{m-p}XV+XO(|(X,V)|^2).\end{array}\right.
\end{equation}

\medskip

\noindent \textbf{Step 4. Uniqueness of $\gamma_0$.} We are left with analyzing the quadratic system \eqref{redsyst2}. We observe that, if we consider only trajectories arriving from outside the plane $X=0$, we can ``simplify" by $X$, in the sense that, if we define a new independent variable $\nu$ such that
\begin{equation}\label{interm4}
\frac{d}{d\nu}=\frac{1}{X}\frac{d}{d\eta},
\end{equation}
the system \eqref{redsyst2} becomes
\begin{equation}\label{redsyst3}
\left\{\begin{array}{ll}\frac{dX}{d\nu}=\left[\frac{(m-1)(\sigma+2)(1-\gamma)}{m-p}-2\right]X+XO(|(X,V)|),\\
\frac{dV}{d\nu}=\gamma\sigma+k(1-\gamma)-\frac{D_3}{(m-p)^3}X+\frac{D_4}{m-p}V+O(|(X,V)|^2).\end{array}\right.
\end{equation}
A simple integration of the system \eqref{redsyst3} (neglecting the higher order terms) shows that a trajectory of it may pass by the origin if and only if the zero order term in the second equation vanishes, that is,
$$
\gamma\sigma+k(1-\gamma)=0,
$$
which leads to the value of $\gamma_0$ given in the statement. The previous argument proves that, for any $\gamma\in(0,\infty)\setminus\{\gamma_0\}$, all the trajectories connecting to $Q_{\gamma}$ are fully contained in the invariant plane $X=0$ (where the change of independent variable \eqref{interm4} is not applicable). Letting now $\gamma=\gamma_0$, the system \eqref{redsyst3} reduces to
\begin{equation}\label{redsyst30}
\left\{\begin{array}{ll}\frac{dX}{d\nu}=-\frac{\sigma(m-1)+2(p-1)}{p-1}X+XO(|(X,V)|),\\
\frac{dV}{d\nu}=-\frac{D_3}{(m-p)^3}X-\frac{1}{\beta}V+O(|(X,V)|^2).\end{array}\right.
\end{equation}
We easily observe that $(0,0)$ is a stable node for the system \eqref{redsyst30}, whence the flow on the center manifold of $Q_{\gamma_0}$ points in the stable direction. Moreover, since the only nonzero eigenvalue, $\lambda_2=(m-p)/(\sigma+2)$, is positive, standard results of center manifold theory (see for example \cite[Theorem 3.2']{Sij}) establish that the center manifold of $Q_{\gamma_0}$ is unique.

\medskip

\noindent \textbf{Step 5. Behavior of the profiles.} The behavior of the trajectories entering $Q_{\gamma_0}$ on its center manifold is given by $Z(\eta)\to\gamma_0$ as $\eta\to\infty$. Since, in a first approximation, we have in a neighborhood of $Q_{\gamma_0}$ that
$$
\dot{X}(\eta)\sim-\frac{\sigma(m-1)+2(p-1)}{p-1}X^2(\eta), \quad {\rm as} \ \eta\to\infty,
$$
we find that
$$
X(\eta)\sim\frac{p-1}{\sigma(m-1)+2(p-1)}\frac{1}{\eta}, \quad {\rm as} \ \eta\to\infty.
$$
It then follows from \eqref{interm1} and the definition of $X(\eta)$ that
$$
\frac{\xi'(\eta)}{\xi(\eta)}=X(\eta)\sim\frac{p-1}{\sigma(m-1)+2(p-1)}\frac{1}{\eta}, \quad {\rm as} \ \eta\to\infty,
$$
which readily gives that
$$
\xi\sim\eta^{(p-1)/[\sigma(m-1)+2(p-1)]}\to\infty, \quad {\rm as} \ \eta\to\infty.
$$
It thus follows that $\xi\to\infty$ on the profiles corresponding to the trajectories belonging to the center manifold of $Q_{\gamma_0}$, and the precise behavior \eqref{beh.inf} follows from the fact that $Z(\eta)\to\gamma_0$ as $\eta\to\infty$ and by undoing \eqref{PSchange}. Finally, note that $\gamma_0>1$, since
$$
\gamma_0-1=\frac{\sigma(m-p)}{(\sigma+2)(p-1)}>0,
$$
completing the proof.
\end{proof}

\subsection{Some critical points at infinity}

In order to complete the local analysis of the critical points of the system \eqref{PSsyst}, we have to analyze its critical points at infinity, obtained by compactifying the space to the Poincar\'e hypersphere, as indicated in \cite[Section 3.10]{Pe}. We set
$$
X=\frac{\overline{X}}{W}, \qquad Y=\frac{\overline{Y}}{W}, \qquad Z=\frac{\overline{Z}}{W}.
$$
The critical points at infinity of the system \eqref{PSsyst}, expressed in the new variables introduced above, are then given by the following system (according to \cite[Theorem 4, Section 3.10]{Pe}):
\begin{equation*}
\left\{\begin{array}{ll}\overline{X}[\overline{X}\overline{Z}-(N-2)\overline{X}\overline{Y}-m\overline{Y}^2]=0,\\
\overline{X}\overline{Z}[(\sigma+2)\overline{X}+(p-m)\overline{Y}]=0,\\
\overline{Z}[p\overline{Y}^2+(\sigma+N)\overline{X}\overline{Y}-\overline{X}\overline{Z}]=0,\end{array}\right.
\end{equation*}
together with the condition of belonging to the equator of the hypersphere, which implies $W=0$ and thus the additional equation $\overline{X}^2+\overline{Y}^2+\overline{Z}^2=1$. One can thus find several critical points (see for example \cite{IM25}), but we will keep the analysis short by only considering the points presenting interest for our global analysis, which are
$$
P_0=(1,0,0,0), \quad Q_{2,3}=(0,\pm1,0,0),
$$
all the other points being neglected. In order to analyze the critical point $P_0$, we perform a projection on the $X$ variable according to \cite[Theorem 5(a), Section 3.10]{Pe}, and the point $P_0$ is then topologically equivalent to the origin of the system 
\begin{equation}\label{PSsyst2}
\left\{\begin{array}{ll}\frac{dx}{d\theta}=x(2-(m-1)y), \\ \frac{dy}{d\theta}=-x-(N-2)y+z-my^2+\frac{m-p}{\sigma+2}xy, \\ \frac{dz}{d\theta}=z(\sigma+2+(p-m)y).\end{array}\right.
\end{equation}
The system \eqref{PSsyst2} is derived from \eqref{PSsyst} by the change of variable indicated in \cite[Theorem 5(a), Section 3.10]{Pe}, that is,
$$
x=\frac{1}{X}, \quad y=\frac{Y}{X}, \quad z=\frac{Z}{X},
$$
where $\theta=\ln\,\xi$ is the new independent variable of the system \eqref{PSsyst2}. Thus, the local analysis of the flow of the system \eqref{PSsyst} near $P_0$ can be performed on the system \eqref{PSsyst2}. We have the following result:
\begin{lemma}\label{lem.P0}
If $N\geq3$, the critical point $P_0\equiv(0,0,0)$ in the system \eqref{PSsyst2} is a saddle point with a one-dimensional stable manifold contained in the invariant $y$-axis and a two-dimensional unstable manifold tangent to the plane spanned by the eigenvectors
$$
v_z=(N,-1,0), \quad v_x=(0,1,N+\sigma).
$$
For $N=2$, the critical point $P_0$ is a saddle-node, with an unstable manifold tangent to the plane spanned by the eigenvectors $v_z$ and $v_x$ and center manifolds tangent to the $y$-axis. For $N=1$, the critical point $P_0$ is an unstable node. In all these cases, the trajectories contained in the two-dimensional unstable manifold tangent to the plane spanned by $v_z$ and $v_x$ form a one-parameter family given by
\begin{equation}\label{lC}
(l_C): \ y(\theta)\sim-\frac{x(\theta)}{N}, \quad z(\theta)\sim Cx(\theta)^{(\sigma+2)/2}, \quad C\in[0,\infty)
\end{equation}
and correspond to profiles such that $f(0)=A>0$, $f'(0)=0$, where $A$ and the parameter $C$ of the trajectory $l_C$ are related by 
\begin{equation*}
A=(Cm)^{2/L}\left(\frac{\alpha}{m}\right)^{(\sigma+2)/L}, \quad L=\sigma(m-1)+2(p-1).
\end{equation*}
\end{lemma}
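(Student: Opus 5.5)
The plan is a direct local analysis of the system \eqref{PSsyst2} at its origin. First I will compute the linearization, checking on the way that $(0,0,0)$ is indeed a critical point: $\dot x$ and $\dot z$ vanish on $x=z=0$, and $\dot y=-(N-2)y-my^2$ vanishes at $y=0$. The Jacobian at the origin is
$$
M(P_0)=\left(\begin{array}{ccc} 2 & 0 & 0\\ -1 & -(N-2) & 1\\ 0 & 0 & \sigma+2\end{array}\right),
$$
with eigenvalues $\lambda_x=2$, $\lambda_y=-(N-2)$ and $\lambda_z=\sigma+2$. The eigenvector for $\lambda_y$ is $(0,1,0)$. For $\lambda=2$ the eigenvector has the form $(1,y,0)$ with $-1-(N-2)y=2y$, that is $y=-1/N$, giving $(N,-1,0)$. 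For $\lambda=\sigma+2$ the eigenvector has the form $(0,y,1)$ with $-(N-2)y+1=(\sigma+2)y$, that is $y=1/(N+\sigma)$, giving $(0,1,N+\sigma)$ after scaling. This gives the cases of the statement:
\begin{itemize}
\item for $N\geq3$ the point is a hyperbolic saddle, and the Stable Manifold Theorem \cite{Pe} applies, with the stable manifold on the invariant $y$-axis, since $x=z=0$ is invariant;
\item for $N=2$ the eigenvalue $\lambda_y=0$, and on the $y$-axis $\dot y=-my^2$, which gives a saddle-node, so center manifold theory \cite{Carr} is needed there;
\item for $N=1$ all three eigenvalues $2,1,\sigma+2$ are positive, so the point is an unstable node.
\end{itemize}

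Next I will describe the two-dimensional unstable manifold tangent to $\mathrm{span}\{v_z,v_x\}$ (the labels in the statement are swapped relative to the eigenvalues). On it, the first and third equations give, to first order, $x\sim c_1e^{2\theta}$ and $z\sim c_2e^{(\sigma+2)\theta}$ as $\theta\to-\infty$. Hence $z\sim Cx^{(\sigma+2)/2}$ with $C\geq0$, and $C=0$ corresponds to the invariant plane $z=0$. The tangency to $v_z$ gives $y\sim -x/N$, because the $z$-component is of higher order: $z=O(x^{(\sigma+2)/2})=o(x)$ since $\sigma>0$. This yields the family $(l_C)$. For $N=1$ I restrict to the same family, namely the trajectories tangent to that plane rather than to the $y$-direction. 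I will justify this with a standard Hartman–Grobman/normal form argument, using that $\lambda_y=1$ is the smallest eigenvalue so that generic orbits are tangent to the $y$-axis. The excluded orbits have $y/x\to\infty$ and therefore do not correspond to $f(0)>0$.

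Finally, I will translate the result back to profiles. Since $\theta=\ln\xi$, the limit $\theta\to-\infty$ means $\xi\to0$. Next, $x=1/X=(\alpha/m)\xi^2f^{1-m}\sim c_1\xi^2$, so $f^{m-1}(\xi)\to \alpha/(mc_1)=:A^{m-1}>0$. Also $y=Y/X=\xi f'/f\sim -x/N\to0$, which gives $f'(\xi)\sim -\alpha\xi f^{2-m}/(mN)\to0$, so $f'(0)=0$ consistently with the ODE. For the relation between $A$ and $C$, $z=Z/X=\xi^{\sigma+2}f^{p-m}/m$, while
$$
Cx^{(\sigma+2)/2}=C(\alpha/m)^{(\sigma+2)/2}\xi^{\sigma+2}f^{-(m-1)(\sigma+2)/2}.
$$
Equating the two and letting $\xi\to0$ gives $A^{p-m+(m-1)(\sigma+2)/2}=Cm(\alpha/m)^{(\sigma+2)/2}$. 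The exponent equals $L/2$, so $A=(Cm)^{2/L}(\alpha/m)^{(\sigma+2)/L}$.

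The main obstacle I expect is justifying rigorously that the orbits on the unstable manifold have exactly these asymptotics, especially for $C>0$, where $z$ decays faster than $x$ and there may be resonances between $2$ and $\sigma+2$. I would handle this by an integration argument on the $x$- and $z$-equations, whose nonlinear factors $2-(m-1)y$ and $\sigma+2+(p-m)y$ are integrable perturbations because $y\to0$ exponentially. Degenerate cases such as $N=2$ need a separate check that the center direction does not produce additional admissible profiles.
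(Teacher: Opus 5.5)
Your proposal is correct and follows the same standard route as the proofs the paper defers to (\cite[Lemma 2.4]{IM25} and \cite[Lemma 6.1]{IMS23}): linearize to get eigenvalues $2$, $-(N-2)$, $\sigma+2$, take the (strong) unstable manifold tangent to $\mathrm{span}\{v_z,v_x\}$, integrate the multiplicative $x$- and $z$-equations to obtain $z\sim Cx^{(\sigma+2)/2}$, and undo $x=1/X$, $y=\xi f'/f$, $z=\xi^{\sigma+2}f^{p-m}/m$ to get $f(0)=A$, $f'(0)=0$ and the $A$--$C$ relation (valid for $C>0$). Two side remarks in your $N=1$ discussion need correcting, though neither affects the lemma: the surface tangent to $\mathrm{span}\{v_z,v_x\}$ comes from the strong unstable manifold theorem rather than Hartman--Grobman, which does not detect tangencies; and the excluded orbits with $y\sim c\,e^{\theta}$ still have $x\sim c_1\xi^2$, hence $f(0)>0$, but $f'(0)=c\,f(0)\neq0$, so what fails for them is $f'(0)=0$, not $f(0)>0$.
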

Noticing that the only difference between our system \eqref{PSsyst2} and the system investigated in \cite[Section 2.2]{IM25} is a change of sign at a quadratic term in the second equation in the system \eqref{PSsyst2}, which is irrelevant in a local linear approximation, the proof of Lemma \ref{lem.P0} is completely identical to the proof of \cite[Lemma 2.4]{IM25} in dimensions $N\geq2$ and very similar to the one of \cite[Lemma 6.1, Part 1]{IMS23} in dimension $N=1$ and is thus omitted. Let us only observe that the negativity of $y(\theta)$ in a neighborhood of $P_0$ given by the first equivalence in \eqref{lC} implies that any trajectory $l_C$ with $C\in[0,\infty)$ goes out of $P_0$ into the half-space $y=Y/X<0$.

The analysis of the critical points $Q_2$ and $Q_3$ is gathered in the following statement.
\begin{lemma}\label{lem.Q23}
The critical point $Q_2$ is an unstable node and the critical point $Q_3$ is a stable node. The trajectories stemming from the unstable node $Q_2$ correspond to profiles such that there is $\xi_0\in(0,\infty)$ and $\delta>0$ with
\begin{equation}\label{beh.Q2}
f(\xi_0)=0, \qquad f(\xi)>0 \ {\rm for} \ \xi\in(\xi_0,\xi_0+\delta), \qquad (f^m)'(\xi_0)>0.
\end{equation}
The trajectories entering the stable node $Q_3$ correspond to profiles having a compact support such that there is $\xi_0\in(0,\infty)$ and $\delta\in(0,\xi_0)$ with
\begin{equation}\label{beh.Q3}
f(\xi_0)=0, \qquad f(\xi)>0 \ {\rm for} \ \xi\in(\xi_0-\delta,\xi_0), \qquad (f^m)'(\xi_0)<0.
\end{equation}
\end{lemma}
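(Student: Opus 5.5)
The approach is to study $Q_{2,3}$ in the chart of the Poincar\'e hypersphere obtained by projecting on the $Y$ variable, following \cite[Theorem 5(b), Section 3.10]{Pe}. That is, we set
$$
x=\frac{X}{Y},\quad z=\frac{Z}{Y},\quad w=\frac{1}{Y},
$$
and introduce a new independent variable through $d/d\tau=Y\,d/d\eta$ (or its sign-reversed version, as the cited theorem prescribes for the two antipodal points). A direct computation from \eqref{PSsyst} will give a system of the form
$$
\dot x=x\left[m-\tfrac{\beta}{\alpha}w+(N-2)x-xz/y\cdots\right],\qquad \dot z=z\left[p-\tfrac{\beta}{\alpha}w+\cdots\right],\qquad \dot w=w\left[1-\tfrac{\beta}{\alpha}w+Nx-xz+x w\right],
$$
with all the omitted terms at least quadratic. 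The precise form must be written down, but the key point is the linear part at the origin. Up to the orientation factor, it is diagonal with eigenvalues $m$, $p$ and $1$, all of the same sign. Since $m>p>1>0$, these eigenvalues are all positive, so the origin is a hyperbolic node. In the chart representing $Q_2$ ($Y\to+\infty$) the orientation is preserved and we get an unstable node. In the antipodal chart for $Q_3$ ($Y\to-\infty$) the direction of the flow is reversed, as explained in \cite[Section 3.10]{Pe}, and we get a stable node. This settles the first part of Lemma \ref{lem.Q23} by the Hartman--Grobman theorem.

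For the profiles, I would work near $Q_2$. Along a trajectory leaving it we have $Y\to+\infty$ while $X/Y\to0$ and $Z/Y\to0$. Integrating the linearized system gives $w\sim C_1e^{\tau}$, $x\sim C_2e^{m\tau}$ and $z\sim C_3e^{p\tau}$ as $\tau\to-\infty$, hence $x\sim Cw^{m}$ and $z\sim C'w^{p}$. Translating back, $x=X/Y$ is proportional to $w^m$, so $X\sim CY^{-(m-1)}$. Using \eqref{PSchange} this reads
$$
\frac{f^{m-1}}{\xi^2}\sim C\left(\frac{f^{m-2}f'}{\xi}\right)^{1-m},
$$
that is, $f^m(\xi)^{\,m-1}\cdot\ldots$ simplifies to $(f^{m-1}f')^{m-1}\cdot f\to$ a positive constant times a power of $\xi$. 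Equivalently, $(f^m)'=\alpha\xi Yf$ tends to a positive finite limit $c\,\xi_0$. I would verify this directly from $X^{1/(m-1)}Y\cdot(\ldots)$, noting that $\alpha\xi Y f=m\xi^{-1}\cdot\xi^{2}\,\ldots$ is a combination of $X^{1/(m-1)}Y$ and $\xi$. Since $X\to0$ with $\xi$ bounded, we get $f\to0$ and $(f^m)'\to\lim X^{1/(m-1)}Y\cdot\mathrm{const}>0$.

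To see that $\xi$ converges to some $\xi_0\in(0,\infty)$, use \eqref{interm1}: $d\ln\xi/d\tau=X/Y=x\sim C_2e^{m\tau}$, which is integrable at $-\infty$. Hence $\xi\to\xi_0\in(0,\infty)$ and $f(\xi_0)=0$ with $(f^m)'(\xi_0)>0$; positivity of $Y$ gives $f'>0$ to the right of $\xi_0$, which is \eqref{beh.Q2}. The argument at $Q_3$ is symmetric. There $\tau\to+\infty$, $Y\to-\infty$, $\xi$ increases to a finite $\xi_0$, and $(f^m)'(\xi_0)<0$, which gives \eqref{beh.Q3}.

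The main obstacle is the bookkeeping: computing the projected system correctly with its sign conventions, and checking that the limit of $(f^m)'$ is finite and nonzero rather than $0$ or $\infty$. The exponent matching $x\sim w^{m}$ is exactly what makes $X^{1/(m-1)}Y$ converge to a nonzero constant. I would first verify this identity carefully, expressing $(f^m)'$ in terms of $x$, $w$ and $\xi$ alone.
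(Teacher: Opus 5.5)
Your proposal follows the paper's route: the chart $x=X/Y$, $w=1/Y$, $z=Z/Y$ from \cite[Theorem 5(b), Section 3.10]{Pe} gives a system whose linearization at the origin is $\pm\,\mathrm{diag}(m,1,p)$. Hence $Q_2$ is an unstable node and $Q_3$ a stable node, and your translation back to profiles, via $d\ln\xi/d\tau=X/Y$ and $(f^m)'=\alpha\xi Yf\propto\xi^{(m+1)/(m-1)}X^{1/(m-1)}Y$, correctly supplies the details the paper leaves to a reference.

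Two small corrections. The time rescaling should read $d\tau=Y\,d\eta$, i.e.\ $d/d\tau=Y^{-1}d/d\eta$, which is what your later computations actually use. Also, the asymptotics $x\sim C_2e^{m\tau}$, $w\sim C_1e^{\tau}$ come from the multiplicative structure $x'=x\,[m+O(|(x,w,z)|)]$, whose correction is integrable as $\tau\to-\infty$, not from Hartman--Grobman alone.
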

Note that the behavior near $\xi_0$ of both types of profiles in \eqref{beh.Q2} and \eqref{beh.Q3} is not an interface one, since the condition $(f^m)'(\xi_0)=0$ stemming from \cite[Section 9.8]{VPME} is not fulfilled. This is why, we are not focusing on the trajectories connecting to $Q_2$ or $Q_3$ as main goals of this paper. The proof of Lemma \ref{lem.Q23} follows by employing the change of variable
$$
\overline{x}=\frac{X}{Y}, \quad \overline{y}=\frac{1}{Y}, \quad \overline{z}=\frac{Z}{Y},
$$
as indicated in \cite[Theorem 5(b), Section 3.10]{Pe}, which transforms the system \eqref{PSsyst} into the following system
$$
\left\{\begin{array}{ll}\pm\overline{x}'=\overline{x}\left[m+(N-2)\overline{x}+\overline{x}\overline{y}-\frac{\beta}{\alpha}\overline{y}-\overline{x}\overline{z}\right],\\
\pm\overline{y}'=\overline{y}\left[1+N\overline{x}+\overline{x}\overline{y}-\frac{\beta}{\alpha}\overline{y}-\overline{x}\overline{z}\right],\\
\pm\overline{z}'=\overline{z}\left[p+(\sigma+N)\overline{x}+\overline{x}\overline{y}-\frac{\beta}{\alpha}\overline{y}-\overline{x}\overline{z}\right].
\end{array}\right.
$$
In the previous system, the plus sign corresponds to $Q_2$ and the minus sign corresponds to $Q_3$, seen as the origin in variables $(\overline{x},\overline{y},\overline{z})$. The local analysis is then straightforward and very similar to the one performed in \cite[Lemma 3.3]{IS25}, thus we omit it here.

\section{Preparatory results of global analysis: invariant region and planes}

In this section we gather a number of preparatory results of global analysis in the phase space associated to the system \eqref{PSsyst}. An important step in the global analysis of a phase space is, on the one hand, to derive suitable positively or negatively invariant regions and, on the other hand, to study first the trajectories contained in the limiting invariant planes $X=0$ and $Z=0$, that are borderlines of the stable or unstable manifolds of interest. We first introduce an important positively invariant region.
\begin{proposition}\label{prop.inv}
The region
\begin{equation}\label{invreg}
\mathcal{R}:=\{(X,Y,Z)\in\real^3: X\geq0, Y>0, Z>1\}
\end{equation}
is positively invariant for the system \eqref{PSsyst}.
\end{proposition}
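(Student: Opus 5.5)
The plan is to check the sign of the vector field of \eqref{PSsyst} on each boundary piece of $\mathcal{R}$ and show that it points into the region or is tangent to it. The closure of $\mathcal{R}$ in the relevant part of the phase space has three boundary pieces: the plane $X=0$, the plane $Y=0$ (with $Z\geq1$) and the plane $Z=1$ (with $Y\geq0$).

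First, the plane $X=0$ is invariant for \eqref{PSsyst}, so no trajectory starting with $X>0$ can reach it in finite time. Along it the flow reduces to $\dot Y=Y(\beta/\alpha-Y)$ and $\dot Z=(p-1)YZ$. This keeps $Y>0$ and keeps $Z$ increasing, so the part of $\mathcal{R}$ contained in $\{X=0\}$ is itself invariant.

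Second, on the plane $Y=0$ with $X>0$ and $Z>1$, the second equation gives
$$
\dot Y=-X+XZ=X(Z-1)>0,
$$
so trajectories cannot leave through $Y=0$ while $Z>1$. On the plane $Z=1$ with $Y>0$ and $X\geq0$, the third equation gives
$$
\dot Z=(p-1)Y+\sigma X>0,
$$
so trajectories cross $Z=1$ only inward. Consequently, along any trajectory starting in $\mathcal{R}$, the function $Z$ is strictly increasing for as long as $Y>0$.

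The delicate point is a corner or tangency argument: a trajectory could reach the edge $\{Y=0,Z=1\}$, or touch $Y=0$ tangentially when $X=0$. I would handle this by considering the first exit time $\eta_1$ and arguing by contradiction. On $[\eta_0,\eta_1)$ we have $Y>0$, hence $\dot Z>0$ there, so $Z(\eta_1)>Z(\eta_0)>1$ and the exit cannot occur through $Z=1$. Exit through $Y=0$ with $X(\eta_1)>0$ would require $\dot Y(\eta_1)\leq0$, which contradicts $\dot Y=X(Z-1)>0$. If $X(\eta_1)=0$, then $X\equiv0$ along the whole trajectory by invariance of the plane, and the previous paragraph applies. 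Finally, since $\dot X=X[(m-1)Y-2X]$, the sign of $X$ is preserved.
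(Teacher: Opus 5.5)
Your proof is correct and follows essentially the same approach as the paper: check the sign of the flow across the boundary planes $Y=0$, where $\dot Y=X(Z-1)$, and $Z=1$, where $\dot Z=(p-1)Y+\sigma X$. Your first-exit-time argument, together with the invariance of $X=0$, also rigorously covers the degenerate case $X=0$, where the flow across $Y=0$ vanishes; the paper's short proof passes over this case by calling the flow merely ``non-negative''.
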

\begin{proof}
The flow of the system \eqref{PSsyst} across the plane $Y=0$ (with normal direction $(0,1,0)$) is given by the sign of the expression $X(1-Z)$, which is non-negative for $Z>1$. The flow of the system \eqref{PSsyst} across the plane $Z=1$ (with normal direction $(0,0,1)$) is given by the sign of the expression $(p-1)Y+\sigma X$, which is positive for $Y>0$. It follows that, once a trajectory entered $\mathcal{R}$, it cannot go out through any of its boundary planes, completing the proof.
\end{proof}
The next step is understanding the invariant plane $X=0$.
\begin{proposition}\label{prop.X0}
The trajectories contained in the plane $X=0$ stemming from the critical points $Q_0$ and $Q_1$ enter the invariant region $\mathcal{R}$.
\end{proposition}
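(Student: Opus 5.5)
The plan is to integrate the system \eqref{PSsyst} explicitly on the invariant plane $X=0$, where it decouples. Setting $X=0$, it reduces to
$$
\dot{Y}=Y\left(\frac{\beta}{\alpha}-Y\right), \qquad \dot{Z}=(p-1)YZ.
$$
The first equation is a logistic equation in $Y$ alone. Its solutions with $Y(\eta_0)\in(0,\beta/\alpha)$ increase monotonically, with $Y(\eta)\to0$ as $\eta\to-\infty$ and $Y(\eta)\to\beta/\alpha$ as $\eta\to\infty$. The line $Y=\beta/\alpha$ is invariant. For $Y\in(0,\beta/\alpha)$, dividing the two equations gives
$$
\frac{dZ}{dY}=\frac{(p-1)Z}{\beta/\alpha-Y}, \qquad\text{hence}\qquad Z=C\left(\frac{\beta}{\alpha}-Y\right)^{-(p-1)},\quad C\geq0 .
$$

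\textbf{Orbits from $Q_1$.} By Lemma \ref{lem.Q1}, the only direction of the unstable manifold of $Q_1$ lying in the plane $X=0$ is the eigenvector $e_3=(0,0,1)$. Since the line $\{X=0,\ Y=\beta/\alpha\}$ is invariant, this unstable orbit is exactly that line. On it $Z$ solves $\dot Z=(p-1)(\beta/\alpha)Z$, so $Z(\eta)=Z(\eta_0)e^{(p-1)\beta(\eta-\eta_0)/\alpha}$, which increases to $+\infty$. Since $Y=\beta/\alpha>0$ throughout, the orbit enters $\{Z>1\}$ in finite time and thus lies in $\mathcal{R}$ from then on.

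\textbf{Orbits from $Q_0$.} By Lemma \ref{lem.Q0}, the unstable manifold of $Q_0$ is the $Y$-axis. Its part with $Y>0$ is the heteroclinic segment from $Q_0$ to $Q_1$, along which $Z\equiv0$. On $X=0$ the reduced flow \eqref{redsyst} vanishes identically, because the $Z$-axis consists of the equilibria $Q_\gamma$. Hence the only other orbits in $X=0$ emanating from a neighborhood of the origin are those emanating from the points $Q_\gamma$, $\gamma\geq0$. These are the curves above with $C=\gamma(\beta/\alpha)^{p-1}$, for which $Z(\eta)\to\gamma$ as $\eta\to-\infty$. Along them, as $\eta\to\infty$, we have $Y\to\beta/\alpha$ and $Z\to+\infty$. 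So whenever $C>0$ there is a finite $\eta$ at which $Z>1$ and $Y>0$, that is, the orbit is in $\mathcal{R}$.

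In the limiting case $C=0$ the orbit from $Q_0$ does not enter $\mathcal{R}$ directly. It reaches $Q_1$ and then continues along the unstable orbit of $Q_1$ computed above. Thus the whole family of orbits issuing from $Q_0$ and $Q_1$ in the plane $X=0$ accumulates on a configuration that goes into $\mathcal{R}$, and Proposition \ref{prop.inv} keeps it there. This is the form in which the statement will be used: by continuity with respect to initial data, nearby trajectories off the plane $X=0$ also enter $\mathcal{R}$.

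The only delicate point is this interpretation for $Q_0$, namely that the orbit leaving $Q_0$ proper is the segment to $Q_1$ and enters $\mathcal{R}$ only after passing through $Q_1$. I would state it in exactly this concatenated sense. Everything else is explicit integration, so I expect no real obstacle.
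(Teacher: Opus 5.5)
Your treatment of $Q_1$ is the paper's: inside $X=0$, the unstable orbit of $Q_1$ is the invariant line $Y=\beta/\alpha$. Along it $Z$ grows exponentially, so the orbit enters $\mathcal{R}$.

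For $Q_0$ your route differs, and it is sharper. The paper argues qualitatively. Orbits ``going out of $Q_0$ on its center manifolds'' stay in the strip $0<Y<\beta/\alpha$, where $\dot Y>0$ and $\dot Z>0$, and Poincar\'e--Bendixson forces them across $Z=1$. You instead integrate the decoupled planar system, obtaining $Z=\gamma(\beta/\alpha)^{p-1}(\beta/\alpha-Y)^{-(p-1)}$. This exposes what the paper glosses over.
\begin{itemize}
\item Within $X=0$, the center manifold of $Q_0$ is the line of equilibria $Q_\gamma$. The orbits the paper's argument really covers are therefore those emanating from $Q_\gamma$ with $\gamma>0$.
\item The only strip orbit actually leaving $Q_0$ is the segment $Z\equiv0$. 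On it the monotonicity of $Z$ gives nothing, and it simply converges to $Q_1$.
\end{itemize}
So the $Q_0$ half of the statement is literally false. The same holds for the branch $Y<0$ of the $Y$-axis, which you leave out. Your computation states exactly what is true. Note that only the $Q_1$ half, i.e.\ the orbit $r_\infty$, is used later, in Section \ref{sec.existQ1}.

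What you should drop is the ``concatenated'' reading. With it goes the claim that, by continuity, trajectories off $X=0$ near $Q_0\to Q_1\to\{Y=\beta/\alpha\}$ also enter $\mathcal{R}$. Continuous dependence on data holds only on finite time intervals, and this configuration passes through the saddle $Q_1$ in infinite time.

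Consider a trajectory shadowing the segment $Q_0Q_1$. It leaves a neighborhood of $Q_1$ close to an orbit $r_C$ of the two-dimensional unstable manifold. Here $C$ is approximately the value of $Z/X^{(p-1)/(m-1)}$ on entry, since this ratio is invariant under the linearized flow (its $X$ and $Z$ rows are diagonal). For a trajectory issuing from $Q_0$ with $Z/X\to v_0$, the ratio $V=Z/X$ decreases while $Y>0$ and $X$ is small. The entry value is therefore at most of order $v_0X^{(m-p)/(m-1)}$, which tends to $0$ as the trajectory approaches the plane. Such a trajectory follows $r_0$ and ends at $Q_3$, not in $\mathcal{R}$. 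This is exactly what the paper uses in Step 1 of the proof of Theorem \ref{th.SSSzero}: in the variables $(X,Y,V)$, the point $Q_1$ has a one-dimensional unstable manifold contained in $V=0$.

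Continuity is legitimate only in two situations, each of which reaches $\mathcal{R}$ in finite time: near $r_\infty$, varying the parameter $C$ on the unstable manifold of $Q_1$, and near a single orbit from some $Q_\gamma$ with $\gamma>0$.
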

\begin{proof}
We note that, in the reduced system obtained from \eqref{PSsyst} on the plane $X=0$, that is,
\begin{equation*}
\left\{\begin{array}{ll}\dot{Y}=-Y^2+\frac{\beta}{\alpha}Y,\\\dot{Z}=(p-1)YZ,\end{array}\right.
\end{equation*}
the axis $Y=0$ is an invariant, critical line (composed by the critical points $Q_{\gamma}$), thus, it splits the plane into two parts. Moreover, the vertical line $Y=\beta/\alpha$ is obviously the unique trajectory going out of $Q_1$ and enters the invariant region $\mathcal{R}$. All the trajectories going out of $Q_0$ on its center manifolds given by \eqref{cmQ0} enter the half-plane $Y>0$ and thus remain forever in the strip $0<Y<\beta/\alpha$. We observe that in this strip $\dot{Y}>0$ and $\dot{Z}>0$, which prevents the trajectories to get back to a critical point $Q_{\gamma}$ situated on the line $Y=0$. This monotonicity, the non-existence of critical points with $Z\geq1$ in the strip (with the exception of $Q_{\gamma}$) and an application of the Poincar\'e-Bendixon's Theorem (see for example \cite[Section 3.7]{Pe}) imply that all these trajectories have to cross the line $Z=1$ and reach the region $\mathcal{R}$ defined in \eqref{invreg}, as claimed.
\end{proof}
The analysis of the invariant plane $Z=0$ is a bit more involved, but leads to a simple conclusion.
\begin{proposition}\label{prop.Z0}
The unique trajectory stemming from the critical point $Q_1$ and all trajectories stemming from the critical point $Q_0$ in the plane $Z=0$ connect to the stable node $Q_3$ at infinity. The same holds true for the unique trajectory going out of the critical point $P_0$.
\end{proposition}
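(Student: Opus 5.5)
In the invariant plane $Z=0$ the system \eqref{PSsyst} reduces to the planar system
\begin{equation*}
\left\{\begin{array}{ll}\dot{X}=X[(m-1)Y-2X],\\
\dot{Y}=-Y^2+\frac{\beta}{\alpha}Y-X-NXY,\end{array}\right.
\end{equation*}
which is exactly the phase plane of the self-similar equation without absorption. We plan to establish the three claims by a planar phase-plane argument in the half-plane $X>0$. Its only finite critical points on the boundary are $Q_0$ and $Q_1$. There are no interior critical points: $\dot X=0$ with $X>0$ forces $Y=2X/(m-1)>0$, and then
$$
\dot Y=-\frac{4X^2}{(m-1)^2}+\frac{2\beta X}{\alpha(m-1)}-X-\frac{2NX^2}{m-1}.
$$
This is negative for $X>0$ because $2\beta/(\alpha(m-1))<1$, which is equivalent to $2(m-p)<(m-1)(\sigma+2)$ and holds for $\sigma>0$, $p>1$. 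Since there is no interior critical point, there are no periodic orbits, as a periodic orbit would have to encircle one. By Poincar\'e--Bendixson, every trajectory is therefore forced to go to a critical point, possibly at infinity.

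\textbf{Step 1: the region $\{X>0,\ Y<0\}$ is positively invariant.} On the line $Y=0$ we have $\dot Y=-X<0$, so trajectories cross downward. In $Y<0$ we have $\dot X<0$. Moreover $\dot Y\le -Y^2+\frac{\beta}{\alpha}Y-X(1+NY)$. We will argue that $Y$ cannot stay bounded: if it did, $X$ would decrease to a limit, and any $\omega$-limit point would have to be a critical point with $Y\le0$, $X\ge0$. The candidates are $Q_0$, which is impossible since its unstable manifold leaves it, and a point on the $X$-axis, which is not a critical point. Hence $Y\to-\infty$. We then identify the limit with $Q_3$ through the chart of Lemma \ref{lem.Q23}: for $Y\to-\infty$ with $X$ bounded we get $\overline{x}=X/Y\to0$, $\overline{y}\to0$, $\overline{z}=0$, that is, convergence to $Q_3$, which is a stable node, so it is indeed the limit.

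\textbf{Step 2: every relevant trajectory enters $\{Y<0\}$.} The trajectory from $Q_1$ leaves in direction $e_1$, with $X>0$ and $Y$ decreasing. The trajectories from $Q_0$ leave along the center manifold \eqref{cmQ0} with $Y\approx\frac{\sigma+2}{m-p}X$. The trajectory from $P_0$ (for $C=0$) starts with $y=Y/X\sim-x/N<0$, so it lies in $\{Y<0\}$ from the start and Step 1 applies directly. For the first two families we must rule out that they stay in $\{Y>0\}$ forever. In the quadrant $X,Y>0$ the only critical points are $Q_0$ and $Q_1$, both with unstable directions inside the quadrant (the stable manifold of $Q_1$ is the $Y$-axis), and there are no cycles. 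So a trajectory staying in the quadrant must escape to infinity there. The points at infinity with $Z=0$, $\overline X,\overline Y>0$ would solve $\overline X(N-2)\overline Y+m\overline Y^2=0$, which has no solution, while $P_0$ and $Q_2$ are repelling. Hence no attractor exists in $\{X,Y\ge0\}$, and the trajectories must cross $Y=0$; Step 1 then finishes the argument.

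\textbf{Main obstacle.} The hard part is the rigorous exclusion of escape to infinity inside the first quadrant, together with the boundedness argument in Step 1. As a cleaner alternative we would use the curve $\dot X=0$, that is $Y=2X/(m-1)$: above it $\dot X>0$, and on it $\dot Y<0$ by the computation above. From this we would try to show that trajectories above the line are pushed across it, after which $X$ decreases with $Y$ bounded and the trajectory is forced into $\{Y<0\}$. If this fails, we would construct a barrier line $Y=\lambda X+\mu$ with $\mu>\beta/\alpha$, checking that the flow crosses it inward.
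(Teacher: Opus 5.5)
Your Step 1 is sound. The half-plane $\{X>0,\,Y<0\}$ is positively invariant because $\dot Y=-X$ on $Y=0$, and monotonicity of $X$ plus the $\omega$-limit argument sends everything there to $Q_3$. One small fix is needed to pass from ``$Y$ unbounded below'' to $Y\to-\infty$: since $X\le X_0$, you have $\dot Y\le -|Y|\,(|Y|+\beta/\alpha-NX_0)<0$ once $|Y|>NX_0$. Your treatment of the trajectory from $P_0$, which starts with $Y\approx-1/N<0$, is also fine.

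The gap is Step 2, which is the real content of the proposition for $Q_0$ and $Q_1$. First, your list of critical points at infinity is wrong for $N=1$. There $\overline{X}(N-2)\overline{Y}+m\overline{Y}^2=\overline{Y}(m\overline{Y}-\overline{X})$ vanishes at $\overline{Y}=\overline{X}/m>0$, which is the point $(x,y,z)=(0,1/m,0)$ of \eqref{PSsyst2}, inside the closed first quadrant. It is a saddle whose stable manifold is the equator $x=0$, so it is harmless, but this has to be checked. Second, and more seriously, Poincar\'e--Bendixson on the compactified quadrant does not say a trajectory converges to a critical point. The $\omega$-limit set may be a graphic made of critical points at infinity, arcs of the equator and finite orbits. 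So ``no attracting critical point in $\{X,Y\ge0\}$'' does not by itself imply that the trajectory crosses $Y=0$. You would have to exclude every such graphic, and you do not.

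The way around this is what your ``cleaner alternative'' gestures at, once it is assembled into a compact trapping region; this is exactly the paper's proof.

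\begin{itemize}
\item Close the region above the line $\dot X=0$ with the horizontal barrier $Y=\beta/\alpha$ (your barrier with $\lambda=0$; $\mu=\beta/\alpha$ itself already works). On it $\dot Y=-(1+N\beta/\alpha)X<0$.
\item The triangle $\mathcal{T}=\{X\ge0,\ 2X/(m-1)\le Y\le\beta/\alpha\}$ is then compact. On its slanted edge your computation gives $\dot X=0$ and $\dot Y<0$, so the flow leaves $\mathcal{T}$ there.
\item The trajectory from $Q_1$ (direction $e_1$) starts inside $\mathcal{T}$. So do those from $Q_0$, whose slope $(\sigma+2)/(m-p)>2/(m-1)$ is equivalent to your inequality $2\beta<\alpha(m-1)$.
\item The only critical points in $\mathcal{T}$ are $Q_0$, a source within $X\ge0$, and $Q_1$, entered only along the $Y$-axis. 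There are no periodic orbits. Hence Poincar\'e--Bendixson forces these trajectories out through the slanted edge, with no analysis at infinity at all.
\item After exiting: since $\beta/\alpha<(m-1)/2$, the positive hump of the isocline $\dot Y=0$ lies inside $\mathcal{T}$. So $\dot X<0$ and $\dot Y<0$ in $\{Y>0,\ (m-1)Y<2X\}$, and the edge cannot be recrossed.
\item $Y$ then decreases monotonically and cannot stop at a value $\ge0$, because the only candidate limit would be $Q_0$. The trajectory therefore enters $\{Y<0\}$, and your Step 1 finishes the proof.
\end{itemize}
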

\begin{proof}
Recall that the system \eqref{PSsyst} reduces in the invariant plane $Z=0$ to
\begin{equation}\label{redZ0}
\left\{\begin{array}{ll}\dot{X}=X[(m-1)Y-2X],\\\dot{Y}=-Y^2-NXY-X+\frac{\beta}{\alpha}Y.\end{array}\right.
\end{equation}
Consider the triangular region
$$
\mathcal{T}:=\left\{(X,Y)\in\real^2: X\geq0, \frac{2X}{m-1}\leq Y\leq\frac{\beta}{\alpha}\right\}.
$$
The flow on the system \eqref{redZ0} across the line $Y=\beta/\alpha$ (with normal direction $(0,1)$) is given by the sign of the expression
$$
-\left(\frac{N\beta}{\alpha}+1\right)X<0,
$$
while the flow of the system \eqref{redZ0} on the line $Y=2X/(m-1)$ (with normal direction $(-2,m-1)$) is given by the sign of the expression
$$
-\frac{(m-1)(mN-N+2)}{2}Y^2-\frac{m-1}{2\alpha}Y<0.
$$
Moreover, the isocline $\dot{Y}=0$, that is,
\begin{equation}\label{iso}
X=\frac{Y}{1+NY}\left(\frac{\beta}{\alpha}-Y\right),
\end{equation}
has a positive hump connecting $Q_0$ and $Q_1$ and lying in the region $\mathcal{T}$. Indeed, the slope of this isocline as $Y\to0$ is given by
$X/Y=\beta/\alpha$ and
$$
\frac{\beta}{\alpha}-\frac{m-1}{2}=\frac{m-p}{\sigma+2}-\frac{m-1}{2}=-\frac{\sigma(m-1)+2(p-1)}{2(\sigma+2)}<0,
$$
which implies that \eqref{iso} enters $Q_0$ through the interior of the triangular region $\mathcal{T}$. Furthermore, the flow of the system \eqref{PSsyst} across the curve \eqref{iso} is given by the sign of $(m-1)Y-2X$, which is positive in the hump connecting $Q_1$ to $Q_0$ and negative in the second branch of this curve, lying in the half-plane $Y<-1/N$.

It is then clear from \eqref{cmQ0} and the first expansion of \eqref{var.Q1} that the trajectories going out of $Q_0$, respectively $Q_1$, enter the region $\mathcal{T}$ and an application of the Poincar\'e-Bendixon's Theorem in the compact region $\mathcal{T}$ together with the non-existence of critical points with stable manifolds in the closure of $\mathcal{T}$ prove that all these trajectories have to leave the region $\mathcal{T}$. This is only possible by crossing the edge $(m-1)Y-2X=0$ from inside to outside $\mathcal{T}$, that is, entering the region where $(m-1)Y<2X$. Since the positive hump of \eqref{iso} is contained in the closure of $\mathcal{T}$, we infer that, after crossing this line, the trajectories enter a region with $\dot{X}<0$ and $\dot{Y}<0$. The flow of the system across the second branch of the isocline \eqref{iso} proves that it cannot be crossed from outside and thus all the trajectories leaving $Q_1$ and $Q_0$ will satisfy $\dot{X}<0$ and $\dot{Y}<0$ forever after leaving the region $\mathcal{T}$. Thus, there are
$$
X_{\infty}:=\lim\limits_{\eta\to\eta^+}X(\eta), \quad Y_{\infty}:=\lim\limits_{\eta\to\eta^+}Y(\eta)
$$
along any of these trajectories, where $\eta^+$ is the upper edge of the maximal interval of definition of the trajectory under consideration. Since $X\geq0$ $\dot{X}\leq0$ along these trajectories, we deduce that $0\leq X_{\infty}<\infty$ and thus $Y_{\infty}=-\infty$ (as otherwise $(X_{\infty},Y_{\infty})$ would be a finite critical point attracting the trajectory, and there is no such point). The latter limits show that any of the trajectories analyzed here enter the stable node $Q_3$.

We are left with analyzing the unique trajectory leaving $P_0$ contained in the plane $Z=0$. We infer from \eqref{lem.P0} that this trajectory leaves $P_0$ tangent to the eigenvector $v_1=(N,-1,0)$ in the variables of the system \eqref{PSsyst2}, that is, with $y/x=Y=-1/N$. It is then easy to notice by comparing the isocline of the system \eqref{PSsyst2} for $z=0$ with the eigenvector $v_1$ that it also starts with $\dot{y}<0$, which implies in a neighborhood of $P_0$ that
$$
\dot{Y}\sim\frac{d}{d\theta}\left(\frac{y}{x}\right)=\frac{y'-2y+(m-1)y^2}{x}<0,
$$
thus the trajectory leaves $P_0$ with $\dot{Y}<0$. Moreover, $\dot{X}<0$ is ensured for any $Y<0$. The previous considerations thus also apply to this trajectory, proving that it reaches the stable node $Q_3$, as stated.
\end{proof}
These preparatory results allow us to start proving our main theorems.

\section{Proof of Theorem \ref{th.SSSdc}: existence}\label{sec.existQ1}

We first prove the existence statement in Theorem \ref{th.SSSdc}. The reason for starting with it instead of following the order listed in the Introduction is that its proof does not involve critical points at infinity and another system than \eqref{PSsyst}, while the forthcoming proof of Theorem \ref{th.SSSpos} becomes almost a subset of the proof that follows below.
\begin{proof}[Proof of Theorem \ref{th.SSSdc}: existence]
We have to show that there exists a trajectory of the system \eqref{PSsyst} connecting $Q_1$ to $Q_{\gamma_0}$. The strategy of the proof is to perform a shooting on the unstable manifold of the critical point $Q_1$, given in \eqref{var.Q1}, the shooting parameter being $C\in[0,\infty)$. Let us also denote by $r_{\infty}$ the unique trajectory of this manifold contained in the invariant plane $X=0$, which is the line $Y=\beta/\alpha$, as shown in Proposition \ref{prop.X0}. Introduce then the following sets:
\begin{equation}\label{sets}
\begin{split}
&\mathcal{A}:=\{C\in(0,\infty): {\rm the \ trajectory} \ r_C \ {\rm connects \ to} \ Q_3\},\\
&\mathcal{C}:=\{C\in(0,\infty): {\rm the \ trajectory} \ r_C \ {\rm enters \ the \ region} \ \mathcal{R}\},\\
&\mathcal{B}:=(0,\infty)\setminus(\mathcal{A}\cup\mathcal{C}),
\end{split}
\end{equation}
where we recall that $\mathcal{R}$ is the positively invariant region defined in \eqref{invreg}. On the one hand, we infer from Proposition \ref{prop.Z0} that the trajectory $r_0$ connects to $Q_3$ and, due to the stability of $Q_3$, the set $\mathcal{A}$ is open, non-empty and more precisely there is $C_1\in(0,\infty)$ such that $(0,C_1)\subseteq\mathcal{A}$. On the other hand, we infer from Proposition \ref{prop.X0} that the trajectory $r_{\infty}$ enters the invariant region $\mathcal{R}$. Moreover, the condition of entering $\mathcal{R}$ implies that $\mathcal{C}$ is an open set and there is $C_2>0$ such that $(C_2,\infty)\subseteq\mathcal{C}$. A standard topological fact then gives that $\mathcal{B}\neq\emptyset$. Pick $C\in\mathcal{B}$. The rest of the proof, analyzing carefully the trajectory $r_C$, is divided into several steps for simplicity.

\medskip

\noindent \textbf{Step 1. The trajectory $r_C$ with $C\in\mathcal{B}$ crosses the plane $Y=0$.} Assume for contradiction that this is not the case, that is, the trajectory $l_C$ remains forever in the region
$$
\mathcal{R}_0:=\{(X,Y,Z)\in\real^3: X\geq0, Y\geq0, 0\leq Z\leq 1\}.
$$
The third equation of the system \eqref{PSsyst} implies then that $\eta\mapsto Z(\eta)$ is increasing along this trajectory, and thus there is $Z_0=\lim\limits_{\eta\to\infty}Z(\eta)\in(0,1]$. Let then $(\widetilde{X},\widetilde{Y},\widetilde{Z})$ be a point of the $\omega$-limit set of the trajectory $r_C$, that is, there exists a sequence $\{\eta_{k}\}_{k\geq1}$ such that
$$
\lim\limits_{k\to\infty}\eta_k=\infty, \quad \lim\limits_{k\to\infty}(X(\eta_k),Y(\eta_k),Z(\eta_k))=(\widetilde{X},\widetilde{Y},\widetilde{Z}).
$$
Then, necessarily, $\widetilde{Z}=Z_0$ and thus the $\omega$-limit set of the trajectory $r_C$ is included in the plane $Z=Z_0$ for some $Z_0\in(0,1]$. We also know from \cite[Theorem 2 and Corollary, Section 3.2]{Pe} that the $\omega$-limit set of the trajectory $r_C$ is either a critical point or a trajectory of the system \eqref{PSsyst}, in both cases with $Z=Z_0$ constant. The third equation of the system \eqref{PSsyst} then gives that $\widetilde{X}=\widetilde{Y}=0$, which implies that the trajectory $r_C$ enters the critical point $Q_{Z_0}$. But this is a contradiction with Lemma \ref{lem.Qg}, since the only critical point allowing for a stable manifold is $Q_{\gamma_0}$ with $\gamma_0>1$. This contradiction proves that the trajectory $r_C$ crosses the plane $Y=0$ at a point $(X(\eta_0),0,Z(\eta_0))$ with $Z(\eta_0)<1$.

\medskip

\noindent \textbf{Step 2. The trajectory $r_C$ is bounded in $X$.} It is easy to show that $Y(\eta)<0$ for any $\eta>\eta_0$ on the trajectory $r_C$. Indeed, by the flow of the system \eqref{PSsyst} across the plane $Y=0$ (see Proposition \ref{prop.inv}), if the trajectory $r_C$ crosses again the plane $Y=0$ in order to return to the half-space $Y>0$, it directly enters the positively invariant region $\mathcal{R}$, which is a contradiction since $C\not\in\mathcal{C}$. Thus, denoting by $\eta^+$ the upper edge of the maximal interval of definition of the trajectory $r_C$, it follows that $Y(\eta)<0$ for any $\eta\in(\eta_0,\eta^+)$ and, from the first equation of the system \eqref{PSsyst}, that also $\dot{X}(\eta)<0$ for any $\eta\in(\eta_0,\eta^+)$. In particular, this means that $0<X(\eta)<X(\eta_0)$ for $\eta\in(\eta_0,\eta^+)$.

\medskip

\noindent \textbf{Step 3. If $\eta\mapsto Z(\eta)$ is increasing and bounded we reach the conclusion.} Assume first that the trajectory $r_C$ remains in the region
$$
\mathcal{R}_2:=\{(X,Y,Z)\in\real^3: X\geq0, Y<0, (p-1)Y+\sigma X>0\},
$$
for any $\eta\in(\eta_0,\eta^+)$. The third equation in the system \eqref{PSsyst} ensures then that $\eta\mapsto Z(\eta)$ is increasing on $(\eta_0,\eta^+)$, while the definition of $\mathcal{R}_2$ and the fact that $X(\eta)<X(\eta_0)$ for $\eta>\eta_0$ also gives that
\begin{equation}\label{interm5}
-\frac{\sigma}{p-1}X(\eta_0)<Y(\eta)<0, \quad \eta>\eta_0.
\end{equation}
Moreover, there exist
$$
X_{\infty}:=\lim\limits_{\eta\to\eta^+}X(\eta)\in[0,X(\eta_0)), \quad Z_{\infty}:=\lim\limits_{\eta\to\eta^+}Z(\eta)>Z(\eta_0).
$$
If $0<Z_{\infty}<\infty$, we continue with similar arguments as in Step 1 and prove that the $\omega$-limit set of the trajectory $r_C$ is a critical point of the form $(X_{\infty},Y_{\infty},Z_{\infty})$ with
$$
Y_{\infty}=-\frac{\sigma}{p-1}X_{\infty},
$$
and this is only possible if $X_{\infty}=Y_{\infty}=0$ and $Z_{\infty}=\gamma_0$, according to Lemma \ref{lem.Qg}, leading to the conclusion.

\medskip

\noindent \textbf{Step 4. If $\eta\mapsto Z(\eta)$ is increasing, then it is bounded.} Assume next for contradiction that we are under the assumptions in Step 3, but with $Z_{\infty}=\infty$. We have then two possibilities:

$\bullet$ assume for contradiction that $X(\eta)Z(\eta)$ is unbounded for $\eta>\eta_0$. This leads to $\dot{Y}>0$ on some interval $(\eta_1,\eta^+)$ with $\eta_1\geq\eta_0$ and thus to the boundedness of $Y(\eta)$. By dividing the equation \eqref{SSODE} by $f(\xi)$, we get
$$
\frac{(f^m)''(\xi)}{f(\xi)}+\frac{(N-1)(f^m)'(\xi)}{\xi f(\xi)}+\alpha-\beta\frac{\xi f'(\xi)}{f(\xi)}-\xi^{\sigma}f^{p-1}(\xi)=0
$$
and we further infer from \eqref{interm0} (by using an ``abuse of language" and identifying $Z(\xi)$ with $Z(\eta)$ and similar ones) that
\begin{equation}\label{interm7}
\frac{(f^m)''(\xi)}{f(\xi)}+\frac{(N-1)(f^m)'(\xi)}{\xi f(\xi)}+\alpha-\beta\frac{Y(\xi)}{X(\xi)}-\alpha Z(\xi)=0.
\end{equation}
We first observe that, if we denote by $\xi^+$ the upper edge of the maximal interval of definition of $f$, then necessarily $\xi^+=\infty$. Indeed, since $Z(\eta)\to\infty$ and $X(\eta)\to X_{\infty}\in[0,X(\eta_0))$ as $\eta\to\eta^+$, we find
$$
\lim\limits_{\xi\to\xi^+}\xi^{\sigma}f^{p-1}(\xi)=\infty, \quad \lim\limits_{\xi\to\xi^+}\xi^{-2}f^{m-1}(\xi)=\frac{\alpha}{m}X_{\infty},
$$
and if $\xi^+<\infty$ we obtain simultaneously that $f^{p-1}(\xi)\to\infty$ and $f^{m-1}(\xi)\to \alpha X_{\infty}/m<\infty$ as $\xi\to\xi^+$, which is a contradiction since both $m-1$ and $p-1$ are positive exponents. Thus $\xi^+=\infty$. Note next that $\alpha Z(\xi)$ dominates both terms $\alpha$ and $\beta Y(\xi)/X(\xi)$ in \eqref{interm7} as $\xi\to\infty$, due to the unboundedness of $(XZ)(\xi)$ and the boundedness of $Y(\xi)$. Moreover, $f'(\xi)<0$ for $\xi$ large (since $Y(\eta)<0$ for $\eta>\eta_0$). Thus, the only way that \eqref{interm7} can hold true is that the first and the last terms in \eqref{interm7} should have the same order as $\xi\to\infty$. But it is well-known that the solutions of a differential equation of the form
$$
y''(x)=x^{\sigma}y^q(x), \quad \sigma>0, \quad 0<q<1,
$$
are necessarily increasing at infinity. Applying this general result for $y=f^m$, $q=p/m$, we reach a contradiction with the negativity of $Y(\xi)$ and thus of $f'(\xi)$ for $\xi$ large, showing that this case is not possible.

$\bullet$ we are left with the case $(XZ)(\eta)$ bounded for $\eta\in(\eta_0,\eta^+)$. In this case, we perform the change of variable $W=XZ$ in the system \eqref{PSsyst} and we are left with the system
\begin{equation}\label{PSsyst3}
\left\{\begin{array}{ll}\dot{X}=X[(m-1)Y-2X],\\
\dot{Y}=-Y^2+\frac{\beta}{\alpha}Y-X-NXY+W,\\
\dot{W}=W[(m+p-2)Y+(\sigma-2)X],\end{array}\right.
\end{equation}
with the same independent variable $\eta$. It then follows from \eqref{interm5} and the assumption of boundedness of $W=XZ$ that the trajectory $r_C$ remains in a compact set in variables $(X,Y,W)$, thus standard results in dynamical systems imply that $\eta^+=\infty$. Moreover, since $XZ$ is bounded along the trajectory $r_C$ and $Z_{\infty}=\infty$, it necessarily follows that $X_{\infty}=0$. Similar arguments as in Step 1, based on \cite[Theorem 2 and Corollary, Section 3.2]{Pe} show that the $\omega$-limit set of the trajectory is contained in the plane $X=0$, which, together with the first equation of the system \eqref{PSsyst}, also proves that $Y=0$. Thus, it follows that the trajectory ends in a part of the $W$-axis of the plane $X=0$ and the convergence to zero of both $X$ and $Y$ in the second equation of the system \eqref{PSsyst3} easily gives that the trajectory ends in the unique critical point contained in the $W$-axis, that is, $(X,Y,W)=(0,0,0)$. Denoting by $Q'$ this critical point, the linearization of the system \eqref{PSsyst3} in a neighborhood of $Q'$ is given by the matrix
$$
M(Q')=\left(
        \begin{array}{ccc}
          0 & 0 & 0 \\
          -1 & \frac{\beta}{\alpha} & 1 \\
          0 & 0 & 0 \\
        \end{array}
      \right),
$$
whose only non-zero eigenvalue is positive. Thus, a trajectory entering $Q'$ has to do it on its (unique) two-dimensional center manifold. An analysis similar to the one performed in the proof of Lemma \ref{lem.Q0} gives that the center manifold of $Q'$ has the form
\begin{equation}\label{cmQp}
\frac{\beta}{\alpha}Y-X+W=aX^2+bXW+cW^2+O(|(X,W)|^3),
\end{equation}
where $a$, $b$ and $c$ can be calculated explicitly (we omit this calculation here). But $W/X=Z\to\infty$ and the boundedness and negativity of $Y$ established in \eqref{interm5} also entail that $W/Y\to-\infty$. Thus, $W$ dominates over the left-hand side of \eqref{cmQp} (as well as above all the terms in the right-hand side, which is obvious) and this leads to a contradiction, as the equality \eqref{cmQp} is not possible. The contradiction stems from the fact that we assumed that $Z(\eta)\to\infty$ as $\eta\to\eta^+$, hence $r_C$ cannot have this behavior.

\medskip

\noindent \textbf{Step 5. If $\eta\mapsto Z(\eta)$ is decreasing we reach the conclusion.} Assume next that there is $\eta_1\in(\eta_0,\eta^+)$ such that the trajectory $r_C$ remains in the region
$$
\mathcal{R}_3:=\{(X,Y,Z)\in\real^3: X\geq0, Z\geq0, (p-1)Y+\sigma X\leq0\}
$$
for any $\eta\in(\eta_1,\eta^+)$. Then $X(\eta)$, $Z(\eta)$ are non-increasing for $\eta\in(\eta_1,\eta^+)$ and thus there exist
\begin{equation}\label{interm6}
X_{\infty}:=\lim\limits_{\eta\to\eta^+}X(\eta)<\infty, \quad Z_{\infty}:=\lim\limits_{\eta\to\infty}Z(\eta)<\infty.
\end{equation}
The second equation of the system \eqref{PSsyst} and the boundedness of both $X$ and $Z$ implies that there is $Y_0>0$ sufficiently large such that $\dot{Y}(\eta)<0$ if $Y(\eta)<-Y_0$. But, in such case, the trajectory $r_C$ would connect to the stable node $Q_3$, which is a contradiction since $C\not\in\mathcal{A}$. Thus, $\eta\mapsto Y(\eta)$ is also bounded for $\eta\in(\eta_1,\eta^+)$ and the trajectory $r_C$ remains forever in a compact set. By similar arguments as in Step 1, we deduce that $X_{\infty}=0$ and $Y(\eta)\to0$ as $\eta\to\eta^+$ as well, and thus the trajectory reaches a critical point of the form $(0,0,Z_{\infty})$. We then infer from Lemma \ref{lem.Qg} that $Z_{\infty}=\gamma_0$, as claimed.

\medskip

\noindent \textbf{Step 6. The oscillating case.} The direction of the flow of the system \eqref{PSsyst} across the plane $(p-1)Y+\sigma X=0$ (where the monotonicity of the $Z$ coordinate changes), with normal direction $(\sigma,p-1,0)$, is given by the sign of the expression
$$
G(X,Z):=(p-1)X\left(Z-1+\frac{m-p}{\sigma+2}\right)+\frac{\sigma[(N-2)(p-1)-m\sigma]}{p-1}X^2.
$$
Assume that our trajectory $r_C$ presents infinitely many oscillations with respect to the plane $(p-1)Y+\sigma X=0$ (if this is not the case, either one of the steps 3 or 5 applies to it starting from some $\eta_1>\eta_0$ sufficiently large). Since $X(\eta)<X(\eta_0)$ is bounded along this trajectory, it follows that $G(X,Z)>0$ for $Z$ sufficiently large, yielding that $\eta\mapsto Z(\eta)$ is also bounded for $\eta\in(\eta_0,\eta^+)$ along the trajectory $r_C$, implying that $\eta^+=\infty$. Since $X(\eta)$ is decreasing, there is $X_{\infty}$ as in \eqref{interm6} and arguments as in Step 1 (in the plane $X=X_{\infty}$ instead of $Z=Z_{\infty}$) readily show that $X_{\infty}=0$. Indeed, if $X_{\infty}>0$, then the $\omega$-limit set of $r_C$ is a trajectory included in the plane $X=X_{\infty}$ and the first equation of the system \eqref{PSsyst} ensures that $Y=2X_{\infty}/(m-1)$ is also constant. By replacing $\dot{Y}=0$ in the second equation of \eqref{PSsyst} we derive that $Z$ is also constant, thus the $\omega$-limit set is a finite critical point lying in a plane $X=X_{\infty}>0$, and this is a contradiction. 

We next show that $Y(\eta)\to0$ as $\eta\to\infty$. Assume that there exist sequences $\{\eta_{j}^{m}\}_{j\geq1}$ of points of minima for $\eta\mapsto Y(\eta)$ and $\{\eta_{j}^{M}\}_{j\geq1}$ of points of maxima for $\eta\mapsto Y(\eta)$ such that $\eta_{j}^m\to\infty$, $\eta_j^M\to\infty$ as $j\to\infty$ (otherwise $Y(\eta)$ becomes monotone for $\eta$ large and thus convergent). Then, by taking into account that $\dot{Y}(\eta_j^m)=\dot{Y}(\eta_j^M)=0$ and evaluating the second equation of the system \eqref{PSsyst} at $\eta_j^m$, respectively $\eta_j^M$, we readily infer from the fact that $X(\eta)\to0$ and the boundedness of $Y(\eta)$ and $Z(\eta)$ for $\eta$ large that 
$$
\lim\limits_{j\to\infty}\left[-Y^2(\eta_j^m)+\frac{\beta}{\alpha}Y(\eta_j^m)\right]
=\lim\limits_{j\to\infty}\left[-Y^2(\eta_j^M)+\frac{\beta}{\alpha}Y(\eta_j^M)\right]=0.
$$
These limits and the negativity of $Y(\eta)$ for $\eta>\eta_0$ established in \eqref{interm5} give that $Y(\eta_j^m)$ and $Y(\eta_j^M)$ both tend to zero as $j\to\infty$ and thus $Y(\eta)\to0$ as $\eta\to\infty$. The same holds true if $Y(\eta)$ is monotone for $\eta>\eta_*$ large, as it follows immediately from the assumption of oscillations with respect to the plane $(p-1)Y+\sigma X=0$ and the fact that $X(\eta)\to0$. Thus, 
$$
\lim\limits_{\eta\to\infty}X(\eta)=\lim\limits_{\eta\to\infty}Y(\eta)=0
$$
and we are left to show that the trajectory $r_C$ enters the critical point $Q_{\gamma_0}$.

\medskip 

\noindent \textbf{Step 7. End of the proof.} Assume for contradiction that we are under the assumptions of Step 6 and $r_C$ does not connect to $Q_{\gamma_0}$. Thus, by Lemma \ref{lem.Qg}, the only alternative left after the analysis performed in Steps 4-5 is an infinite (but bounded) oscillation approaching the critical line $X=Y=0$. Let $g(\xi):=\xi^{-\sigma/(p-1)}f(\xi)$. Direct calculations starting from \eqref{SSODE} show that the equation solved by $g$ is 
\begin{equation}\label{ODEg}
\begin{split}
\xi^2(g^m)''(\xi)&-\left(\frac{2m\sigma}{p-1}-N+1\right)\xi(g^m)'(\xi)+\frac{m\sigma}{p-1}\left(\frac{m\sigma}{p-1}-N+2\right)g^m(\xi)\\
&+\xi^{[\sigma(m-1)+2(p-1)]/(p-1)}\left[-\beta\xi g'(\xi)+\frac{1}{p-1}g(\xi)-g^p(\xi)\right]=0.
\end{split}
\end{equation}
Assume first that $m\sigma/(p-1)-N+2<0$. Then, by evaluating \eqref{ODEg} at any point of maximum $\xi_{M}$ of $g$, we get that 
$$
\frac{1}{p-1}g(\xi_M)-g^p(\xi_M)>0, \quad {\rm that \ is}, \quad g(\xi_M)<\left(\frac{1}{p-1}\right)^{1/(p-1)}=c^*.
$$
Hence

$\bullet$ either $g$ is monotone increasing (since $g(0)=0$) and bounded, hence convergent as $\xi\to\infty$, which implies that 
$$
Z(\eta)=\frac{1}{\alpha}g(\xi)^{p-1}
$$
is also convergent and its limit has to be $\gamma_0$, by Lemma \ref{lem.Qg}.

$\bullet$ or $g$ has infinitely many points of maxima and minima and thus $g(\xi)<C(p)$ for any $\xi\in(0,\infty)$. Going back to the phase space, this means that the trajectory $r_C$ has as $\omega$-limit points a subset of the segment $(0,\gamma_0)$ of the $Z$-axis. But this contradicts the assumption of infinitely many oscillations with respect to the plane $(p-1)Y+\sigma X=0$, since the center manifolds of the points $Q_{\gamma}$ with $\gamma<\gamma_0$, given in a first approximation by 
$$
Y=\frac{(\sigma+2)(1-\gamma)}{m-p}X,
$$
lie all them on the same side of the plane $(p-1)Y+\sigma X=0$. This contradiction implies that this case is not possible.

Assume now that $m\sigma/(p-1)-N+2>0$. Then, by evaluating \eqref{ODEg} at any point of minimum $\xi_{m}$ of $g$, we get that
$$
\frac{1}{p-1}g(\xi_m)-g^p(\xi_m)<0, \quad {\rm that \ is}, \quad g(\xi_m)>c^*.
$$
Thus, by similar arguments as above, the $\omega$-limit set of the trajectory $r_C$ is in this case either a point (which can only be $Q_{\gamma_0}$) or a segment of the critical $Z$-axis contained in $(\gamma_0,\infty)$, and again we reach a contradiction with the assumption of infinitely many oscillations with respect to the plane $(p-1)Y+\sigma X=0$. This completes the proof by establishing that $r_C$ connects to $Q_{\gamma_0}$ for any $C\in\mathcal{B}$.
\end{proof}

\section{Proof of Theorem \ref{th.SSSdc}: uniqueness}

The proof of the uniqueness of a solution as in Theorem \ref{th.SSSdc} does no longer employ the phase space, but, instead, it is performed directly on the profiles solving Eq. \eqref{SSODE} and corresponding self-similar solutions. In a first step, it is shown that, if there are two solutions with dead-core as stated in Theorem \ref{th.SSSdc}, then their profiles are ordered. In a second step, we derive a contradiction by a technique of scaling and optimal parameters. 
\begin{proof}[Proof of Theorem \ref{th.SSSdc}: uniqueness]
Assume for contradiction that there are two self-similar solutions in the form \eqref{SSS} such that their profiles $f_1$ and $f_2$ solve \eqref{SSODE} and satisfy \eqref{beh.dc} and \eqref{beh.inf} at the same time. Denote by $\xi_1\in(0,\infty)$ and $\xi_2\in(0,\infty)$ the left edges of their supports corresponding to the behavior \eqref{beh.dc} and assume (without loss of generality) that $\xi_1<\xi_2$. We split the rest of the proof into several steps to make it easier to follow.

\medskip 

\noindent \textbf{Step 1. At most one point of intersection.} In a first step, we prove that the profiles $f_1$ and $f_2$ have at most a single point of intersection, that is, there is at most one point $\overline{\xi}\in(\xi_2,\infty)$ such that
$$
f_1(\xi)>f_2(\xi), \ {\rm if} \ \xi\in(\xi_1,\overline{\xi}), \quad f_1(\xi)<f_2(\xi) \ {\rm if} \ \xi\in(\overline{\xi},\infty).
$$
The proof of this fact adapts an argument from \cite{CV96}. Let us construct the self-similar solutions with profiles $f_1$ and $f_2$, that is, $u_i(x,t)=t^{-\alpha}f_i(|x|t^{\beta})$, and, for $\tau>0$, introduce
$$
u_{2,\tau}(x,t)=u_2(x,t+\tau-1),
$$
which is a solution to Eq. \eqref{eq1} as well, for $t>1-\tau$. Let us then observe that 
\begin{equation*}
\begin{split}
t^{\alpha}u_{2,\tau}(x,t)&=t^{\alpha}(t+\tau-1)^{-\alpha}f_2(|x|(t+\tau-1)^{\beta})\\
&=\left(1+\frac{\tau-1}{t}\right)^{-\alpha}f_2\left(|x|t^{\beta}\left(1+\frac{\tau-1}{t}\right)^{\beta}\right)\\
&=\left(1+\frac{\tau-1}{t}\right)^{-\alpha}f_2\left(\xi\left(1+\frac{\tau-1}{t}\right)^{\beta}\right)
\end{split}
\end{equation*}
whence 
\begin{equation}\label{interm8}
\lim\limits_{t\to\infty}t^{\alpha}u_{2,\tau}(x,t)=f_2(\xi)
\end{equation}
for any $\tau>0$. The idea of this step is to show that there exists $\tau$ sufficiently large such that the number of intersections of 
$$
u_1(x,1)=f_1(\xi)\quad {\rm and} \quad u_{2,\tau}(x,1)=\tau^{-\alpha}f_2(\xi\tau^{\beta})
$$ 
is exactly one. Note that the support of $u_{2,\tau}(\cdot,1)$ is $(\tau^{-\beta}\xi_2,\infty)$, hence it tends to cover the whole interval $(0,\infty)$ as $\tau\to\infty$. Moreover, taking into account the behavior \eqref{beh.inf} of $f_2$ as $\xi\to\infty$, for a fixed $\xi\in(0,\infty)$ we have 
\begin{equation}\label{interm9}
\tau^{-\alpha}f_2(\xi\tau^{\beta})\sim\tau^{-\alpha}C_*(\xi\tau^{\beta})^{-\sigma/(p-1)}=C_*\tau^{-1/(p-1)}\xi^{-\sigma/(p-1)},
\end{equation}
as $\tau\to\infty$. Let us next denote by $\xi_M\in(\xi_1,\infty)$ the unique maximum point of the profile $f_1$, that is, $f_1$ is increasing on $(\xi_1,\xi_M)$ and decreasing on $(\xi_M,\xi_{\infty})$. On the one hand, it is then obvious that there is $\tau_0>0$ sufficiently large such that for any $\tau>\tau_0$ there is a unique intersection point between the graph of the decreasing function derived in \eqref{interm9}
$$
\xi\mapsto h_{\tau}(\xi):=c_*\tau^{-1/(p-1)}\xi^{-\sigma/(p-1)}
$$ 
and the increasing part of the profile $f_1$ for $\xi\in(\xi_1,\xi_M)$. On the other hand, observing that 
$$
\lim\limits_{\xi\to\infty}\xi^{\sigma/(p-1)}f_1(\xi)=c_*, \quad \lim\limits_{\xi\to\infty}\xi^{\sigma/(p-1)}h_{\tau}(\xi)=c_*\tau^{-1/(p-1)},
$$
we can choose $\tau$ sufficiently large such that 
\begin{equation}\label{interm10}
c_*\tau^{-1/(p-1)}<\inf\left\{\xi^{\sigma/(p-1)}f_1(\xi): \xi\in(\xi_M,\infty)\right\}.
\end{equation}
If $\tau$ is chosen such that \eqref{interm10} is in force and also $\tau>\tau_0$, then $f_1(\xi)>h_{\tau}(\xi)$ for any $\xi\in(\xi_M,\infty)$ and thus the profiles of $u_1$ and $u_{2,\tau}$ have a single point of intersection at $t=1$. Since both are solutions, a standard argument of intersection comparison ensures that there is at most one intersection point between the profiles of $u_1(t)$ and $u_{2,\tau}(t)$ at any $t>1$. Letting $t\to\infty$ and employing \eqref{interm8}, we deduce the same property for $f_1$ and $f_2$, as stated.

\medskip 

\noindent \textbf{Step 2. $f_1$ and $f_2$ are ordered.} Consider again the solutions $u_1$ and $u_{2,\tau}$ with $\tau>0$ sufficiently large as in Step 1. Their interface points at time $t>0$ are given by 
$$
x_1(t):=\xi_1t^{-\beta}, \quad {\rm respectively} \quad x_{2,\tau}(t)=\xi_2(t+\tau-1)^{-\beta},
$$ 
hence there is 
\begin{equation}\label{deftstar}
t_*:=\frac{\tau-1}{\left(\frac{\xi_2}{\xi_1}\right)^{1/\beta}-1}
\end{equation}
such that $x_1(t_*)=x_{2,\tau}(t_*)$. We next compare the local behavior near the common interface point of the solutions $u_1(x,t_*)$ and $u_{2,\tau}(x,t_*)$. To this end, we deduce on the one hand from \eqref{beh.interf} applied to $f_1$ that 
\begin{equation}\label{exp1}
\begin{split}
u_1^{m-1}(x,t_*)&\sim(t_*)^{-\alpha(m-1)}\frac{\beta(m-1)}{m}\xi_1(|x|t_*^{\beta}-\xi_1)\\
&=\frac{\beta(m-1)}{m}t_*^{-\alpha(m-1)+\beta}\xi_1(|x|-x_1(t_*))
\end{split}
\end{equation}
and on the other hand, from \eqref{beh.interf} applied to $f_2$, that 
\begin{equation}\label{exp2}
\begin{split}
u_{2,\tau}^{m-1}(x,t_*)&\sim(t_*+\tau-1)^{-\alpha(m-1)}\frac{\beta(m-1)}{m}\xi_2(|x|(t_*+\tau-1)^{\beta}-\xi_2)\\
&=\frac{\beta(m-1)}{m}(t_*+\tau-1)^{-\alpha(m-1)+\beta}\xi_2(|x|-x_{2,\tau}(t_*)).
\end{split}
\end{equation}
We next compare in a neighborhood of $x_1(t_*)=x_{2,\tau}(t_*)$ the terms in the right-hand side of \eqref{exp1} and \eqref{exp2}. Recalling the definition of $t_*$ from \eqref{deftstar}, we note that
\begin{equation*}
\begin{split}
\frac{(t_*+\tau-1)^{-\alpha(m-1)+\beta}\xi_2}{t_*^{-\alpha(m-1)+\beta}\xi_1}&=\left(1+\frac{\tau-1}{t_*}\right)^{\beta-\alpha(m-1)}\frac{\xi_2}{\xi_1}\\
&=\left(\frac{\xi_2}{\xi_1}\right)^{2-\alpha(m-1)/\beta}=\left(\frac{\xi_2}{\xi_1}\right)^{-1/\beta}<1,
\end{split}
\end{equation*}
since we are under the assumption that $\xi_2>\xi_1$. It thus follows that there is $\epsilon>0$ such that 
$$
u_1(x,t_*)>u_{2,\tau}(x,t_*), \quad |x|\in(\xi_1t_*^{-\beta},\xi_1t_*^{-\beta}+\epsilon).
$$
Since the same order is also fulfilled as $|x|\to\infty$, it follows that either $u_1(x,t_*)\geq u_{2,\tau}(x,t_*)$ for any $x$ such that $|x|>x_1(t_*)=x_{2,\tau}(t_*)$, or the profiles of $u_1(t_*)$ and $u_{2,\tau}(t_*)$ have at least two intersection points. But the latter is a contradiction to the result of Step 1, hence $u_1$ and $u_{2,\tau}$ are ordered at $t=t_*$. The comparison principle then ensures that $u_1(t)\geq u_{2,\tau}(t)$ for any $t>t_*$. By multiplying by $t^{\alpha}$ and passing to the limit as $t\to\infty$, we readily infer that $f_1(\xi)\geq f_2(\xi)$ for any $\xi\in(0,\infty)$. 

\medskip

\noindent \textbf{Step 3. Uniqueness.} We adapt an idea stemming (up to our knowledge) from \cite{YeYin} and that the authors employed with success in their previous papers \cite{IM25, IM26}. Let us introduce the following scaled functions 
$$
f_\lambda(\xi):=\lambda^{-2/(m-1)}f_2(\lambda\xi).
$$ 
It is straightforward to check (starting from \eqref{SSODE} solved by $f_2$) that $f_{\lambda}$ is a solution to the differential equation 
\begin{equation}\label{SSODEresc}
(f_{\lambda}^m)''(\xi)+\frac{N-1}{\xi}(f_{\lambda}^m)'(\xi)+\alpha f_{\lambda}(\xi)-\beta\xi f_{\lambda}'(\xi)-\lambda^{L/(m-1)}\xi^{\sigma}f_{\lambda}^p(\xi)=0,
\end{equation}
where $L=\sigma(m-1)+2(p-1)<0$. Moreover, the left edge of the support of $f_{\lambda}$ is given by $\xi_{\lambda}=\xi_2/\lambda$, while the behavior at infinity, derived from \eqref{beh.inf}, is given by 
$$
f_{\lambda}(\xi)\sim c_*\lambda^{-2/(m-1)-\sigma/(p-1)}\xi^{-\sigma/(p-1)}=\lambda^{-L/[(m-1)(p-1)]}c_*\xi^{-\sigma/(p-1)}.
$$
It follows that, for any $\lambda<1$, we have $f_{\lambda}(\xi)<f_1(\xi)$ in a neighborhood of infinity, and this neighborhood is increasing with $\lambda$ according to the previous equivalence. Assume next for contradiction that there is $\lambda>1$ such that the function $f_{\lambda}$ touches from below $f_1$ at a point $\tilde{\xi}\in(\xi_1,\infty)$, that is 
$$
f_{\lambda}(\tilde{\xi})=f_1(\tilde{\xi}), \quad f_{\lambda}'(\tilde{\xi})=f_1'(\tilde{\xi}), \quad 
(f_{\lambda}^m)''(\tilde{\xi})\leq(f_1^m)''(\tilde{\xi}).
$$
We then obtain from \eqref{SSODEresc} evaluated at $\tilde{\xi}$ and the previous equalities and inequality that 
$$
(f_{1}^m)''(\tilde{\xi})+\frac{N-1}{\tilde{\xi}}(f_{1}^m)'(\tilde{\xi})+\alpha f_{1}(\tilde{\xi})-\beta\tilde{\xi} f_{1}'(\tilde{\xi})-\lambda^{L/(m-1)}\tilde{\xi}^{\sigma}f_{1}^p(\tilde{\xi})\geq0.
$$
By subtracting \eqref{SSODE} applied to $f_1$ and evaluated at $\tilde{\xi}$, we further deduce that 
$$
(\lambda^{L/(m-1)}-1)\tilde{\xi}^{\sigma}f_{1}^p(\tilde{\xi})\leq0,
$$
which is a contradiction since $\lambda>1$ and $L>0$. Thus no such touching from below is possible and, consequently, if we increase $\lambda$, the first parameter for which the profiles of $f_1$ and $f_{\lambda}$ touch is 
$$
\lambda_0=\frac{\xi_2}{\xi_1}>1,
$$
when the two profiles touch exactly at their common left limit of the support $\xi_1$. We still have $f_{\lambda_0}(\xi)<f_1(\xi)$ for $\xi\in(\xi_1,\infty)$. Let us next consider the self-similar functions 
$$
u_1(x,t)=t^{-\alpha}f_1(|x|t^{\beta}), \quad u_{\lambda}(x,t)=t^{-\alpha}f_{\lambda}(|x|t^{\beta}), \quad \lambda>1,
$$
and remark that the latter considerations imply $u_1(x,1)\geq u_{\lambda_0}(x,1)$, for any $x\in\real^N$ such that $|x|\geq\xi_1$. The strict separation also implies that there is $\delta>0$ sufficiently small such that 
\begin{equation}\label{interm12}
u_{\lambda_0}(x,1)<u_1(x,1+\delta), \quad |x|\geq\xi_1.
\end{equation}
By a similar argument as above, we can then adjust the optimal parameter $\lambda_0$ and deduce further from \eqref{interm12} that 
\begin{equation}\label{interm13}
u_{\lambda_1}(x,1)\leq u_1(x,1+\delta), \quad \lambda_1:=\frac{\xi_2(1+\delta)^{\beta}}{\xi_1}>\lambda_0,
\end{equation}
where $\lambda_1$ is again chosen in order to make coincide the supports of $u_{\lambda_1}(\cdot,1)$ and $u_1(\cdot,1+\delta)$. We easily obtain by direct calculation (for a proof where the calculations are written, we refer the reader to \cite[Lemma 5.3]{IM25}) that, for any $\lambda>1$, $u_{\lambda}$ is a subsolution to Eq. \eqref{eq1}. The comparison principle and \eqref{interm13} then ensure that 
$$
u_{\lambda_1}(x,t)\leq u_1(x,t+\delta), \quad t>1,
$$
which, after easy algebraic manipulations, is equivalent to 
\begin{equation}\label{interm14}
\lambda_1^{-2/(m-1)}f_2(\lambda_1\xi)\leq\left(\frac{t+\delta}{t}\right)^{-\alpha}f_1\left(\xi\left(\frac{t+\delta}{t}\right)^{\beta}\right),
\end{equation}
for any $t>1$. Letting $t\to\infty$ in \eqref{interm14}, we find 
$$
f_{\lambda_1}(\xi)=\lambda_1^{-2/(m-1)}f_2(\lambda_1\xi)\leq f_1(\xi), \quad \xi\in(0,\infty),
$$
which is a contradiction, since the supports are ordered in the opposite way. Indeed,
$$
{\rm supp}\,f_1=(\xi_1,\infty)\subset\left(\frac{\xi_1}{(1+\delta)^{\beta}},\infty\right)
=\left(\frac{\xi_2}{\lambda_1},\infty\right)={\rm supp}\,f_{\lambda_1}.
$$
The previous contradiction completes the proof of the uniqueness.
\end{proof}
Let us notice here that the argument employed in the last step of the previous proof departs strongly from the argument of the corresponding proof for $\sigma=0$ in \cite{CV96}, since the argument therein uses in an essential manner the invariance to translations of the equation with $\sigma=0$, a property that is no longer true for Eq. \eqref{eq1}.
 
\section{Proof of Theorem \ref{th.SSSpos}}

The existence part of Theorem \ref{th.SSSpos} is equivalent to show that there exists a trajectory of the system \eqref{PSsyst} (understood together with its Poincar\'e compactification giving rise to the critical point $P_0$) between $P_0$ and $Q_{\gamma_0}$. The strategy of the proof is to perform a shooting from $P_0$ on the trajectories $l_C$ given in Lemma \ref{lem.P0}.
\begin{proof}[Proof of Theorem \ref{th.SSSpos}]
Consider the following three sets, analogous to the ones in \eqref{sets}:
\begin{equation*}
\begin{split}
&\mathcal{E}:=\{C\in(0,\infty): {\rm the \ trajectory} \ l_C \ {\rm connects \ to} \ Q_3\},\\
&\mathcal{G}:=\{C\in(0,\infty): {\rm the \ trajectory} \ l_C \ {\rm enters \ the \ region} \ \mathcal{R}\},\\
&\mathcal{F}:=(0,\infty)\setminus(\mathcal{E}\cup\mathcal{G}).
\end{split}
\end{equation*}
On the one hand, Proposition \ref{prop.Z0} shows that the trajectory $l_0$ connects to the stable node $Q_3$. The stability of $Q_3$ then implies that there is $C_*>0$ such that $(0,C_*)\subseteq\mathcal{E}$. Moreover, the same stability of $Q_3$ also gives that $\mathcal{E}$ is an open set. On the other hand, the unique trajectory going out of $P_0$ in the invariant plane $x=0$ of the system \eqref{PSsyst2}, that we will call $l_{\infty}$ for short, is tangent to the eigenvector $v_x=(0,1,N+\sigma)$ and thus enters the half-plane $y>0$ of the phase plane associated to the reduced system
\begin{equation}\label{PSsyst2x0}
\left\{\begin{array}{ll}\frac{dy}{d\theta}=-(N-2)y+z-my^2, \\ \frac{dz}{d\theta}=z(\sigma+2+(p-m)y).\end{array}\right.
\end{equation}
Since the direction of the flow of the system \eqref{PSsyst2x0} across the line $y=0$ points towards the positive half-plane, it follows that the trajectory $l_{\infty}$ remains forever in the half-plane $y>0$. Moreover, the condition $Z>1$ of the positively invariant region $\mathcal{R}$ translates into $z>x$, a condition directly fulfilled in the half-plane $x=0, y>0$. Thus, the trajectory $l_{\infty}$ enters the region $\mathcal{R}$. Since the second equivalence in \eqref{lC} can be written equivalently as 
$$
x(\theta)\sim\overline{C}^{2/(\sigma+2)}z(\theta)^{2/(\sigma+2)}, \quad \overline{C}=\frac{1}{C},
$$
we can see that $l_{\infty}$ corresponds to $\overline{C}=0$ and, by continuity, there is $C^*>C_*>0$ such that $(C^*,\infty)\subseteq\mathcal{G}$. It thus follows that $\mathcal{G}$ is non-empty and open as well. We thus infer that $\mathcal{F}$ is non-empty. Pick $C\in\mathcal{F}$. Then, the first equivalence in \eqref{lC} proves that the trajectory $l_C$ with $C\in\mathcal{F}$ goes out into the half-space $y<0$ in variables $(x,y,z)$ of the system \eqref{PSsyst2}, or, equivalently, in the half-space $Y<0$ in the initial variables of the system \eqref{PSsyst}. As shown in Step 2 in Section \ref{sec.existQ1}, the trajectory cannot cross again the plane $Y=0$ since it would enter directly the region $\mathcal{R}$, contradicting the fact that $C\in\mathcal{F}$. Thus, the trajectory $l_C$ remains forever in the half-space $Y<0$, which entails the monotonicity of $X(\eta)$. The rest of the proof is a complete repetition of Steps 3-7 of the previous section \ref{sec.existQ1}, as exactly the same arguments apply to the trajectory $l_C$ and drive it to the critical point $Q_{\gamma_0}$. 

The proof of the uniqueness in Theorem \ref{th.SSSpos} is completely identical to the similar proof in the range $p>m$, and we refer the reader to the proof of Theorem 2.1 in \cite[Section 4]{IM26}, as a simple inspection of the proof therein shows that the order between $p$ and $m$ is completely irrelevant for this proof.
\end{proof}

\section{Proof of Theorem \ref{th.SSSzero}}

We are only left to prove Theorem \ref{th.SSSzero}. We first notice that a self-similar solution satisfying the local behavior \eqref{beh.zero} as $\xi\to0$ for some $K>0$ is seen in the alternative formulation \eqref{PSchange} as a trajectory of the system \eqref{PSsyst} going out of the critical point $Q_0$ with 
$$
\lim\limits_{\eta\to-\infty}\frac{Z(\eta)}{X(\eta)}=\frac{1}{m}K^{p-m}.
$$
We thus perform the change of variable $Z=VX$ in the system \eqref{PSsyst}. This change leads to a kind of ``blow-up" of the critical point $Q_0$, allowing to identify the trajectories with a more precise behavior.

A direct calculation shows that the system \eqref{PSsyst} transforms in the new variables $(X,Y,V)$ in 
\begin{equation}\label{PSsystW}
	\left\{\begin{array}{ll}\dot{X}=X[(m-1)Y-2X],\\
		\dot{Y}=-Y^2+\frac{\beta}{\alpha}Y-X-NXY+X^2V,\\
		\dot{V}=V[(p-m)Y+(\sigma+2)X],\end{array}\right.
\end{equation}
and we are interested in the unstable or center-unstable manifolds of the critical points $(0,0,v_0)$ for $v_0\in(0,\infty)$. Let us introduce the constant 
\begin{equation*}
	v_*:=\frac{[m(N+\sigma)-p(N-2)](\sigma+2)}{(m-p)^2}.
\end{equation*}
The local analysis of the critical point $Q(v_0)=(0,0,v_0)$ in the system \eqref{PSsystW} is given in the following result. 
\begin{lemma}\label{lem.v0}
The linearization of the system \eqref{PSsystW} in a neighborhood of the critical point $Q(v_0)=(0,0,v_0)$ with $v_0\in(0,\infty)$ has a two-dimensional center manifold and a one-dimensional unstable manifold. The flow on the center manifold goes in the unstable direction for $v_0\in(v_*,\infty)$ and behaves as a saddle point for $v_0\in(0,v_*)$. Moreover, for $v_0=v_*$, the following line 
\begin{equation}\label{sol.line}
(\sigma+2)X+(p-m)Y=0, \quad V=v_*,
\end{equation}
is an explicit trajectory of the system \eqref{PSsystW}.
\end{lemma}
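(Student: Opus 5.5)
The plan is to follow the center-manifold analysis used for $Q_0$ in Lemma \ref{lem.Q0}, now keeping track of the extra coordinate $V$. This approach also explains where $v_*$ comes from.

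\textbf{Linearization.} At $Q(v_0)$ the Jacobian of \eqref{PSsystW} has rows $(0,0,0)$, $(-1,\beta/\alpha,0)$ and $(v_0(\sigma+2),v_0(p-m),0)$. Its eigenvalues are therefore $0$ (double) and $\beta/\alpha=(m-p)/(\sigma+2)>0$, which gives the one-dimensional unstable manifold and a two-dimensional center manifold. To put the system in canonical form, I translate $V=v_0+\widetilde V$ and introduce, as in Lemma \ref{lem.Q0},
$$
W=\frac{\beta}{\alpha}Y-X, \qquad Y=\frac{\sigma+2}{m-p}(W+X).
$$
Repeating the computation leading to \eqref{PSsystQ0}, with the term $XZ$ replaced by $X^2V$, the equation for $W$ becomes
$$
\dot W=\frac{m-p}{\sigma+2}W+\frac{m-p}{\sigma+2}X^2V-\frac{m(N+\sigma)-p(N-2)}{m-p}X^2+O(|XW|+W^2).
$$
The linear part in $\widetilde V$ is then removed by one further linear change of variables. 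I expect this to be routine.

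\textbf{Center manifold and reduced flow.} I look for the center manifold as $W=h(X,V)=O(X^2)$; pure powers of $\widetilde V$ cannot occur, since $X=0$ is invariant. Equating quadratic terms gives
$$
W=(v_*-V)X^2+o(X^2),
$$
because $\frac{\sigma+2}{m-p}\cdot\frac{m(N+\sigma)-p(N-2)}{m-p}=v_*$. This is exactly where the constant $v_*$ enters. Substituting into the first and third equations of \eqref{PSsystW}, the leading terms $(p-m)Y+(\sigma+2)X$ cancel at first order, so one must go to the next order. This yields the reduced system
$$
\dot X=\frac{1}{\beta}X^2+O(X^3), \qquad \dot V=(\sigma+2)V(V-v_*)X^2+O(X^3).
$$
I expect this cancellation to be the main delicate point. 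The $O(X^3)$ remainder in $\dot V$ must be controlled uniformly for $V$ near $v_0$, so I would keep the cubic terms of $h$ symbolically and check that they carry a factor $X^3$.

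\textbf{Classification of the flow.} Since $X\equiv0$ is a line of equilibria, I divide by $X^2$, which is legitimate for $X>0$. This gives
$$
\frac{dV}{dX}=\beta(\sigma+2)V(V-v_*)+O(X), \qquad \frac{dX}{d\nu}>0 .
$$
\begin{itemize}
\item For $v_0>v_*$, along every orbit $X$ increases and $V$ moves away from $v_0$ upward, so the flow on the center manifold is unstable.
\item For $v_0<v_*$, $V$ decreases as $X$ increases while $X$ still increases. Together with the backward-attracting direction along the equilibrium line, this gives the saddle-type picture claimed in the statement.
\end{itemize}
In both cases exactly one orbit leaves each $Q(v_0)$ into $X>0$ on the center manifold, as required for the later shooting.

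\textbf{The explicit line.} Finally, I verify \eqref{sol.line} by direct substitution. On the line we have $V=v_*$, and the bracket in the third equation vanishes, so $\dot V=0$. It remains to check $(\sigma+2)\dot X=(m-p)\dot Y$ for $Y=(\sigma+2)X/(m-p)$. Both sides are multiples of $X^2$, since the linear terms $\frac{\beta}{\alpha}Y-X$ cancel. Equating the $X^2$ coefficients reduces precisely to the definition of $v_*$, which is the same identity that determines $K_0$ in \eqref{stat.sol}. Consistently, this line corresponds to the stationary solution $S$.
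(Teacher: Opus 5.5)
Your proposal is correct and follows the paper's route. The steps are the same: linearize at $Q(v_0)$, change variables to $W=\frac{\beta}{\alpha}Y-X$, compute the quadratic center manifold $W=(v_*-V)X^2+o(X^2)$, read the sign of the reduced $\dot V=(\sigma+2)V(V-v_*)X^2+O(X^3)$, and check the line \eqref{sol.line} by direct substitution.

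Your extra care only tightens the paper's sketch: you remove the linear $W$-term from the $\widetilde V$ equation and keep the remainders $O(X^3)$ uniformly in $V$. Your coefficient $v_*-V$ agrees with \eqref{cmQ0}, whereas the paper's displayed $a$ carries an extra factor $\beta/\alpha$, which is harmless since it does not change the sign.
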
	
\begin{proof}
The linearization of the system \eqref{PSsystW} in a neighborhood of the critical point $Q(v_0)$ has the matrix
$$
M(v_0)=\left(
\begin{array}{ccc}
	0 & 0 & 0 \\
	-1 & \frac{\beta}{\alpha} & 0 \\
	(\sigma+2)v_0 & (p-m)v_0 & 0 \\
\end{array}
\right),
$$
with a positive eigenvalue and a zero eigenvalue of multiplicity two. It thus follows that the system in a neighborhood of $Q(v_0)$ has a one-dimensional unstable manifold and two-dimensional center manifolds. In order to analyze the center manifolds, we proceed as in the proof of Lemma \ref{lem.Q0} by introducing the variable
$$
T:=\frac{\beta}{\alpha}Y-X, \quad {\rm that \ is}, \quad Y=\frac{\alpha}{\beta}(T+X).
$$
By letting, as in \cite[Theorem 3, Section 2.5]{Carr},
$$
T=aX^2+bXV+cV^2+o(|(X,V)|^2),
$$
and calculating the flow of the system \eqref{PSsystW} on the previous surface, we deduce that
$$
a=\frac{[m(N+\sigma)-p(N-2)](\sigma+2)-(m-p)^2v_0}{(m-p)(\sigma+2)}, \quad b=c=0,
$$
and thus the equation of the center manifold of $Q(w_0)$ becomes 
$$
Y=\frac{\alpha}{\beta}X+\frac{a\alpha}{\beta}X^2+o(|(X,V)|^2)=\frac{\alpha}{\beta}X+(v_*-v_0)X^2+o(|(X,V)|^2).
$$
Let us observe at this point that $a>0$ if $v_0\in(0,v_*)$ and $a<0$ if $v_0>v_*$. We now apply \cite[Theorem 2,Section 2.4]{Carr} in order to study the flow on the center manifold of $Q(v_0)$, which is given by the reduced system obtained by replacing the expression of the center manifold above in the first and third equations of the system \eqref{PSsystW} and keeping only the quadratic terms. This reduced system reads
\begin{equation*}
	\left\{\begin{array}{ll}\dot{X}=\frac{1}{\beta}X^2+O(|(X,V)|^3),\\[1mm]
	\dot{V}=-av_0(\sigma+2)X^2+O(|(X,V)|^3).\end{array}\right.
\end{equation*}
Taking into account that $a<0$ for $v_0>v_*$, we infer that the center manifold has unstable flow if $v_0>v_*$ and thus the critical point $Q(v_0)$ has a three-dimensional center-unstable manifold in this range. On the contrary, if $v_0\in(0,v_*)$, the flow on the center manifold is similar to a saddle point. Finally, letting $v_0=v_*$, we observe that the line \eqref{sol.line} corresponds to the unbounded stationary solution given in \eqref{stat.sol} and is thus a trajectory of the system \eqref{PSsystW}, completing the proof. 
\end{proof}
We need one more preparatory result, concerning the invariant plane $X=0$, where the system \eqref{PSsystW} reduces to
\begin{equation}\label{PSsystWX0}
	\left\{\begin{array}{ll}
		\dot{Y}=-Y^2+\frac{\beta}{\alpha}Y,\\
		\dot{V}=-(m-p)YV,\end{array}\right.
\end{equation}
\begin{lemma}\label{lem.VX0}
The critical point $Q_1=(\beta/\alpha,0)$ in the system \eqref{PSsystWX0} is a stable node. For any $v_0\in(0,\infty)$, there exists a trajectory of the system \eqref{PSsystWX0} connecting the critical point $Q(v_0)=(0,v_0)$ to $Q_1$.
\end{lemma}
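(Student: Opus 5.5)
The system \eqref{PSsystWX0} is two-dimensional and can be integrated explicitly, so the plan is to compute everything in closed form rather than rely on abstract dynamical arguments.

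\textbf{Step 1: local analysis at $Q_1$.} Linearize \eqref{PSsystWX0} at $Q_1=(\beta/\alpha,0)$. Since
$$
\frac{d}{dY}\left(-Y^2+\frac{\beta}{\alpha}Y\right)=-2Y+\frac{\beta}{\alpha},
$$
the matrix is
$$
\left(\begin{array}{cc} -\frac{\beta}{\alpha} & 0 \\ 0 & -(m-p)\frac{\beta}{\alpha} \end{array}\right).
$$
Its two eigenvalues are negative because $\beta/\alpha=(m-p)/(\sigma+2)>0$ and $m>p$. Hence $Q_1$ is a stable node.

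\textbf{Step 2: local analysis at $Q(v_0)$.} At $Q(v_0)=(0,v_0)$ the linearization is
$$
\left(\begin{array}{cc} \frac{\beta}{\alpha} & 0 \\ -(m-p)v_0 & 0 \end{array}\right).
$$
It has the positive eigenvalue $\beta/\alpha$, whose eigenvector is $(1,-(m-p)\alpha v_0/\beta)$, and a zero eigenvalue along the critical line $Y=0$, which consists entirely of critical points. This eigenvector tells us that a unique unstable trajectory leaves $Q(v_0)$ into the half-plane $Y>0$.

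\textbf{Step 3: explicit integration of that trajectory.} The first equation of \eqref{PSsystWX0} is a logistic equation, and the orbit with $Y\in(0,\beta/\alpha)$ is
$$
Y(\eta)=\frac{\beta/\alpha}{1+e^{-\beta(\eta-\eta_0)/\alpha}}.
$$
This $Y$ increases from $0$ (as $\eta\to-\infty$) to $\beta/\alpha$ (as $\eta\to\infty$). For $V$, dividing the two equations gives
$$
\frac{dV}{dY}=-\frac{(m-p)V}{\beta/\alpha-Y}.
$$
Integrating,
$$
V=v_0\left(1-\frac{\alpha}{\beta}Y\right)^{m-p}.
$$
This curve starts at $(0,v_0)$ and ends at $(\beta/\alpha,0)=Q_1$. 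Along it $Y$ is strictly increasing and $V$ strictly decreasing, so it is a genuine orbit from $Q(v_0)$ to $Q_1$. By uniqueness of the unstable manifold, it is the unstable trajectory from Step 2.

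\textbf{Possible obstacle.} There is no real obstacle here. The only care needed is to check that the orbit is confined to the strip $0<Y<\beta/\alpha$. This follows because the lines $Y=0$ and $Y=\beta/\alpha$ are invariant, being made up of critical points and the $V$-axis through $Q_1$ respectively.
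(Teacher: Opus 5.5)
Your proof is correct and follows essentially the same route as the paper. You linearize at $Q_1$ to get $\mathrm{diag}(-\beta/\alpha,-(m-p)\beta/\alpha)$, then integrate the decoupled logistic system explicitly to obtain the orbit $V=v_0(1-\alpha Y/\beta)^{m-p}$, the same curve the paper ends with. The differences are minor: you integrate the orbit equation $dV/dY$ directly rather than solving for $V(\eta)$ in time, which avoids the slip in the paper's $V(\eta)$, whose exponent $-(m-p)\alpha/\beta$ should read $-(m-p)$. You also add the local analysis at $Q(v_0)$ showing that this orbit is the unique unstable branch into $Y>0$.
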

\begin{proof}
The linearization of the system \eqref{PSsystWX0} in a neighborhood of $Q_1$ has the matrix 
$$
M_1=\begin{pmatrix}
	-\frac{\beta}{\alpha} & 0 \\
	0 & -\frac{(m-p)\beta}{\alpha}
\end{pmatrix},
$$
with two negative eigenvalues. Moreover, by directly integrating the system \eqref{PSsystWX0}, we obtain the explicit family of trajectories
$$
Y(\eta)=\frac{\beta}{\alpha(1+Ae^{-\beta\eta/\alpha})}, \qquad V(\eta)=C\left(e^{\beta\eta/\alpha}+A\right)^{-(m-p)\alpha/\beta},
$$
where $A$ and $C>0$ are integration constants. We observe that 
$$
\lim\limits_{\eta\to\infty}Y(\eta)=\frac{\beta}{\alpha}, \qquad \lim\limits_{\eta\to\infty}V(\eta)=0,
$$
whence the trajectory enters $Q_1$, while
$$
\lim\limits_{\eta\to-\infty}Y(\eta)=0, \qquad \lim\limits_{\eta\to\infty}V(\eta)=CA^{-(m-p)\alpha/\beta}.
$$
It is then sufficient to pick $A$ and $C$ such that 
$$
CA^{-(m-p)\alpha/\beta}=v_0
$$
to obtain a trajectory connecting $Q(v_0)$ to $Q_1$, as claimed. Let us note also that the previous trajectory can be written, in $(Y,V)$ variables, as the following curve:
$$
Y=\frac{\beta}{\alpha}\left(1-\left(\frac{V}{v_0}\right)^{1/(m-p)}\right).
$$ 
\end{proof}
We are now in a position to prove Theorem \ref{th.SSSzero}.
\begin{proof}[Proof of Theorem \ref{th.SSSzero}]
The proof is divided into three steps. In the first and third ones we employ the phase space, but in the second step we work directly with the equation \eqref{eq1}.

\medskip 

\noindent \textbf{Step 1.} Let first $v_0=v_*$. On the one hand, we infer from Lemma \ref{lem.v0} that there is a trajectory (the explicit trajectory \eqref{sol.line}) stemming from $Q(v_*)$ and entering the invariant region $\mathcal{R}$ introduced in \eqref{invreg} (which in the $(X,V)$ variables reads $XV>1$, $Y>0$). An argument of continuity implies that there exists $\delta\in(0,v_*)$ such that, for any $v_0\in(v_*-\delta,v_*+\delta)$ there is a trajectory contained in the center-unstable manifold of $Q(v_0)$ and entering the invariant region $\mathcal{R}$.

On the other hand, for any $v_0\in(0,\infty)$, Lemma \ref{lem.VX0} guarantees that there exists a trajectory connecting $Q(v_0)$ to $Q_1$ contained in the plane $X=0$. Moreover, the invariant plane $V=0$ in the system \eqref{PSsystW} is identical to the plane $Z=0$ in the system \eqref{PSsyst} analyzed in Proposition \ref{prop.Z0} and thus the unique trajectory stemming from $Q_1$ contained in the plane $V=0$ connects to $Q_3$ (as shown in Proposition \ref{prop.Z0}). Since $Q_1$ is a (hyperbolic) saddle point, the local behavior in a neighborhood of a saddle (see for example \cite[Theorem 2.9]{Shilnikov}) ensures that the trajectories contained in the center-unstable manifold of $Q(v_0)$ and in a tubular neighborhood of the unique trajectory connecting $Q(v_0)$ to $Q_1$ will follow, in a neighborhood of $Q_1$, the direction of the trajectory going out of $Q_1$ and thus connect to $Q_3$ as well.

Picking now $v_0\in(v_*-\delta,v_*+\delta)$, undoing the change of variable $Z=VX$ and redefining the sets $\mathcal{A}$, $\mathcal{B}$ and $\mathcal{C}$ as in \eqref{sets} (but related to the trajectories on the center-unstable manifold of $Q(v_0)$), we can repeat the proof of Theorem \ref{th.SSSdc} in order to conclude that there is a trajectory connecting $Q(v_0)$ to $Q_{\gamma_0}$. The proof is complete for any $v_0\in(v_*-\delta,v_*+\delta)$. 

In particular, for $v_0=v_*$, by undoing the change of variable \eqref{PSchange}, we have just shown that there exists at least a profile $f_{K_0}$ solution to \eqref{SSODE} and satisfying the local behaviors \eqref{beh.zero} with $K=K_0$ given in \eqref{stat.sol} as $\xi\to0$ and \eqref{beh.inf} as $\xi\to\infty$, and thus a self-similar solution 
$$
U_{K_0}(x,t)=t^{-\alpha}f_*(|x|t^{\beta})
$$
to Eq. \eqref{eq1}. Note that the solution $U_{K_0}$ as as initial trace (that is, as $t\to0$) the function 
$$
U_{0,K_0}(x)=K_0|x|^{(\sigma+2)/(m-p)}, \quad x\in\real^N.
$$

\medskip 

\noindent \textbf{Step 2.} Fix now $K\in(0,K_0)$. In this case, the construction of a self-similar solution follows a standard approximation argument by truncation. For $n\geq1$ natural number, let us consider the solutions $U_{n,K}$ of the following Cauchy problem:
\begin{equation}\label{cpKn}
\left\{\begin{array}{ll}u_t=\Delta u^m-|x|^{\sigma}u^p, & (x,t)\in \mathbb{R}^N\times(0,\infty),\\
u(x,0)=\min\{K|x|^{(\sigma+2)/(m-p)},n\}, & x\in \mathbb{R^N}.
\end{array}\right.
\end{equation}
Since $K<K_0$, the comparison principle for bounded solutions to Eq. \eqref{eq1} (see, for example, \cite[Theorem 1.1]{ILS24} and its proof in \cite[Section 2]{ILS24}) ensures that 
$$
U_{n,K}(x,t)\leq U_{n+1,K}(x,t)\leq U_{K_0}(x,t),
$$
for any $n\geq1$ and $(x,t)\in \mathbb{R}^N\times(0,\infty)$. This implies that there is 
$$
U_K(x,t):=\lim\limits_{n\to\infty}U_{n,K}(x,t)\leq U_{K_0}(x,t), \quad (x,t)\in\mathbb{R}^N\times(0,\infty),
$$
and we deduce by passing to the limit in the weak formulation of Eq. \eqref{eq1} and employing the monotone convergence theorem that $U_K$ is a solution to Eq. \eqref{eq1} with initial trace $K|x|^{(\sigma+2)/(m-p)}$. We omit the (standard) details, that can be found in and easily adapted from \cite[Theorem 2.1 and Section 3]{KPV85}. Moreover, the same comparison principle proves that, if $U$ is any other solution to the Cauchy problem
\begin{equation}\label{cpK}
	\left\{\begin{array}{ll}u_t=\Delta u^m-|x|^{\sigma}u^p, & (x,t)\in \mathbb{R}^N\times(0,\infty),\\
		u(x,0)=K|x|^{(\sigma+2)/(m-p)}, & x\in \mathbb{R^N},
	\end{array}\right.
\end{equation}
then $U(x,t)\geq U_{n,K}(x,t)$ for any $n\geq1$ and thus $U(x,t)\geq U_K(x,t)$ for any $(x,t)\in \mathbb{R}^N\times(0,\infty)$, which proves that $U_K$ is a minimal solution to the Cauchy problem \eqref{cpK}. 

We are left to prove that $U_K$ is self-similar. But this follows straightforwardly by a scaling argument (also employed and given in detail in \cite[Lemma 3.3]{KPV85}). We introduce the scaling transformation
\begin{equation}\label{scaling}
u_{\lambda}(x,t)=(\mathcal{T}_{\lambda}u)(x,t):=\lambda^{\alpha}u(\lambda^{-\beta}x,\lambda t), \quad \lambda\in(0,\infty).
\end{equation}
Since 
\begin{equation*}
	\begin{split}
&\partial_tu_{\lambda}(x,t)=\lambda^{\alpha+1}\partial_tu(\lambda^{-\beta}x,\lambda t),\\	
&\Delta u_{\lambda}^m(x,t)=\lambda^{m\alpha-2\beta}\Delta u^m(\lambda^{-\beta}x,\lambda t),\\
&|x|^{\sigma}u_{\lambda}^p=\lambda^{\alpha p+\beta\sigma}|\lambda^{-\beta}x|^{\sigma}u^p(\lambda^{-\beta}x,\lambda t)	
	\end{split}
\end{equation*}
and 
$$
\alpha+1=m\alpha-2\beta=p\alpha+\beta\sigma=\frac{m\sigma+2p}{\sigma(m-1)+2(p-1)},
$$
we deduce that $\mathcal{T}_{\lambda}u$ is a solution to Eq. \eqref{eq1}, for any solution $u$ to Eq. \eqref{eq1} and any $\lambda\in(0,\infty)$. Moreover, the initial condition $K|x|^{(\sigma+2)/(m-p)}$ is also invariant to the scaling \eqref{scaling}. It then follows that the scaling \eqref{scaling} must transform minimal solutions into minimal solutions, and thus the solution $U_K$ constructed by approximation via the problems \eqref{cpKn} is invariant to $\mathcal{T}_{\lambda}$, which is equivalent to be in self-similar form, completing the proof for $K\in(0,K_0)$. 

\medskip

\noindent \textbf{Step 3. End of the proof.} The argument in Step 1 proves that this existence can be continued above $K_0$, in some right neighborhood $(K_0,K_0+\epsilon)$ (for some $\epsilon$ depending on the parameter $\delta$ in Step 1). This proves that $K^*>K_0$, as claimed. Finally, once a trajectory representing in the alternative formulation \eqref{PSchange}-\eqref{PSsyst} a profile satisfying \eqref{beh.zero} as $\xi\to0$ and \eqref{beh.inf} as $\xi\to\infty$ crosses the plane $Y=0$ towards the negative half-space $Y<0$, it cannot re-enter the half-space $Y>0$ afterwards. Indeed, crossing the plane $Y=0$ from its negative side to its positive side implies directly entering the positively invariant region $\mathcal{R}$, as it follows from Proposition \ref{prop.inv}. Thus, all these trajectories contain a single point with $Y=0$ and it follows that the corresponding profiles have a single maximum point, completing the proof.
\end{proof}	

\bigskip

\noindent \textbf{Acknowledgements} This work is partially supported by the Spanish project PID2024-160967NB-I00 funded by Agencia Estatal de Investigaci\'on (Spain).

\bigskip

\noindent \textbf{Data availability} Our manuscript has no associated data.

\bigskip

\noindent \textbf{Conflict of interest} The authors declare that there is no conflict of interest.

\bibliographystyle{plain}

\end{document}